\def\<{\langle}
\def\>{\rangle}
\def\Chi{\raise .3ex
\hbox{\large $\chi$}}
\def\({\Bigl (}
\def\){\Bigr )}
\newcommand{\be}{\begin{equation}}
\newcommand{\ee}{\end{equation}}
\newcommand{\bea}{$$ \begin{array}{lll}}
\newcommand{\eea}{\end{array} $$}
\newcommand{\bi}{\begin{itemize}}
\newcommand{\ei}{\end{itemize}}
\numberwithin{equation}{section}
\newtheorem{satz}{Satz}[section]
\newtheorem{theorem}[satz]{Theorem}
\newtheorem{lemma}[satz]{Lemma}
\newtheorem{assumption}[satz]{Assumption}
\DeclareMathOperator{\E}{{\mathbb E}}
\DeclareMathOperator{\G}{{\mathbb G}}
\DeclareMathOperator{\bH}{{\mathbb H}}
\DeclareMathOperator{\R}{{\mathbb R}}
\DeclareMathOperator{\N}{{\mathbb N}}
\DeclareMathOperator{\PP}{{\mathbb P}}
\renewcommand{\phi}{\varphi}
\renewcommand{\cdot}{{\scriptstyle \bullet} }
\providecommand{\abs}[1]{\lvert #1 \rvert}
\renewcommand{\le}{\leqslant}
\renewcommand{\ge}{\geqslant}
\newcommand{\footnoteremember}[2]{
  \footnote{#2}
  \newcounter{#1}
  \setcounter{#1}{\value{footnote}}
}
\newcommand{\footnoterecall}[1]{
  \footnotemark[\value{#1}]
}
\let\epsilon=\varepsilon
\let\ep=\epsilon
\let\phi=\varphi
 \let\be=\beta  
\let\de=\delta
\let\tilde=\widetilde
\newcommand{\Sph}{\mathbb{S}^2}
\newcommand{\SO}{\mathcal{SO}(3)}
\begin{document}
%\dateposted{demain}
%\thanks{}
\title{Blockwise SVD with error in the operator and application to blind deconvolution}
%in a blind deconvolution setting}
\author{S. Delattre\footnoteremember{adr}{Universit\'e Denis Diderot Paris 7 and CNRS-UMR 7099, 175 rue du Chevaleret 75013 Paris, France. E-mail: sylvain.delattre@univ-paris-diderot.fr, dominique.picard@univ-paris-diderot.fr, thomas.vareschi@univ-paris-diderot.fr} M. Hoffmann\footnote{ENSAE and CNRS-UMR 8050, 3 avenue Pierre Larousse, 92245 Malakoff Cedex, France. E-mail: marc.hoffmann@ensae.fr}\,\,\,\,\,\,D. Picard\footnoterecall{adr} and T. Vareschi\footnoterecall{adr}}

\date{}
\maketitle
%\begin{center}
%{\tt [FIRST INCOMPLETE DRAFT]}
%\end{center}

\begin{abstract}
We consider linear inverse problems in a nonparametric statistical framework. Both the signal and the operator are unknown and subject to error measurements. We establish minimax rates of convergence under squared error loss when the operator admits a blockwise singular value decomposition (blockwise SVD) and the smoothness of the signal is measured in a Sobolev sense. We construct  a nonlinear procedure adapting simultaneously to the unknown smoothness of both the signal and the operator and  achieving the optimal rate of convergence to within logarithmic terms. When the noise level in the operator is dominant, by taking full advantage of the blockwise SVD property, we demonstrate that the block SVD procedure overperforms classical methods based on Galerkin projection \cite{EK} or nonlinear wavelet thresholding \cite{HR}. We subsequently apply our abstract framework to the specific case of blind deconvolution on the torus and on the sphere.
\end{abstract}

\noindent {\it Keywords:} Blind deconvolution; blockwise SVD; circular and spherical deconvolution; nonparametric adaptive estimation; linear inverse problems; error in the operator.  \\
\noindent {\it Mathematical Subject Classification: } 62G05, 62G99, 65J20, 65J22.\\
\section{Introduction}
\subsection{Motivation} \label{motivation}
Consider the following idealised statistical problem: estimate a function $f$ (a signal, an image) from data
\begin{equation} \label{Gaussian deconvolution}
y_n = K f  + n^{-1/2} \dot W,
\end{equation}
where 
$$K: \bH \rightarrow \G$$
is a linear operator between two Hilbert spaces $\bH$ and $\G$. 
%$K$ is a linear operator from $\bH$ to $\G$.
%$$f \in L^2(\mu) = \big\{f:{\mathcal D}\rightarrow \R,\int_{{\mathcal D}}f^2\,d\mu<\infty\big\},$$ 
%and  ${\mathcal D} \subset \R^d$ is equipped with a positive measure\footnote{The measure $\mu$ is defined on the Borel sigma-field of ${\mathcal D}$ as a topological space, which we do no specify here; it is developed in details in the examples of circular and spherical deconvolution.} $\mu$.
%$f$ in  $\bH$ is unknown, and so
The observation of the unknown $f \in \bH$ is challenged by the action of the linear degradation $K$ as well as contaminated by an experimental  Gaussian white noise $\dot W$ on $\G$
% $ L^2(\mu)$ 
 with vanishing noise level $n^{-1/2}$ as $n\rightarrow \infty$.
Alternatively, in a density estimation setting, we observe a random sample 
$(Z_1,\ldots, Z_n)$ 
drawn from a probability distribution\footnote{In that setting, $Kf$ must therefore also be a probability density 
%w.r.t. 
%$\mu$ on ${\mathcal D}$
.} 
with density $Kf$. 
In each case, we do not know the operator $K$ exactly, but we have access to 
\begin{equation} \label{Kdelta}
K_{\delta} = K+\delta \dot B,
\end{equation}
where $\dot B$ is a Gaussian white noise on $\bH\times\G$
%$ L^2(\mu)\otimes  L^2(\mu)$, 
thanks to preliminary experiments or calibration through trial functions. 
This setting has been discussed in details in \cite{EK, HR}. In this paper, we are interested in operators $K$ admitting a singular value decomposition (SVD) or a blockwise SVD. In essence, we know the typical eigenfunctions of $K$ but not the eigenvalues. We cover two specific examples of interest: spherical and circular deconvolution. 
\begin{proof}[Spherical deconvolution] Used for the analysis of data distributed on the celestial sphere, see Section \ref{exemple deconvolution spherique} below.
One observes a random sample $(Z_1,\ldots, Z_n)$ with
$$Z_i = \varepsilon_i X_i,\;\;i=1,\ldots, n$$
where the $\varepsilon_i$ are random elements in $\SO$, the group of $3 \times 3$ rotation matrices,
and the $X_i$ are independent and identically distributed on the sphere $ \Sph$, with common density $f$ with respect to  the uniform probability distribution $\mu$ on $\Sph$. In this setting, if the  $\varepsilon_i$ have common density $g$ with respect to the Haar measure $du$ on $\SO$, we have
$$Kf(x) =g\star f(x) = \int_{\SO}g(u)f(u^{-1}x)du,\;\;x\in \Sph.$$
We are interested in the case where the exact form  $g$ is unknown. However, $K$ is block-diagonal in the spherical harmonic basis.
\end{proof}
\begin{proof}[Circular deconvolution] Used for restoring signal or images, see Section \ref{exemple deconvolution circulaire} below. We take $\bH=\G=L^2({\mathbb T})$ the space
of square integrable functions on
 the torus ${\mathbb T}= [0,1]$ (or $[0,1]^d$) appended with periodic boundary conditions. We have
$$Kf(x)=g \star f(x) = \int_{{\mathbb T}} g(u)f(x-u)du,\;\;x\in {\mathbb T}.$$
The degradation process $K = g \star \cdot$ is characterised by the impulse response function $g$ which we do not know exactly. However, $K$ is diagonal in the Fourier basis. 
\end{proof}
Although the problem of estimating $f$ is fairly classical and well understood when $K$ is known (a selected literature is \cite{Tsyb,CavaTsyb, Donoho, AS, CHR, JKPR, KKLPPP} and the references therein), only moderate attention has been paid in the case of an unknown $K$ despite its relevance in practice. When the eigenfunctions of $K$ are known solely, we have the results of Cavalier and Hengartner \cite{CH}, Cavalier and Raimondo \cite{CavaR} but they are confined to the case  where the error in the operator is negligible $\delta \ll n^{-1/2}$. In a general setting with error in the operator, Efromovitch and Kolchinskii \cite{EK} and later Hoffmann and Rei\ss\, \cite{HR} studied the recovery of $f$ when the eigenfunctions and eigenvalues of $K$ are unknown. In both contributions, a marginal attention is paid to the case of sparse or diagonal operators, but it is showed in both papers that unusual rates of convergence can be obtained when $n^{-1/2} \ll \delta$. In a univariate setting, Neumann \cite{Neumann} and Comte and Lacour \cite{ComteLacour} consider the case of deconvolution with an error density, known only through an auxiliary set of $m$ learning data. This formally corresponds to having $\delta = m^{-1/2}$ in our setting. Minimax rates and adaptive estimators are derived in both regimes $m \ll n$ and $n \ll m$. 
%A byproduct of our approach is to identify these ph. Specifically, we address in the paper the following program:
We address in the paper the following program:
\begin{itemize}
\item[i)] Construction of a feasible procedure $\widehat f_{n,\delta}$  estimating $f$ from data \eqref{Gaussian deconvolution} and \eqref{Kdelta} that achieves optimal rates of convergence (up to inessential logarithmic terms). We require $\widehat f_{n,\delta}$ to be adaptive with respect to smoothness constraints on $f$ and $K$.
\item [ii)] Identification of best achievable accuracy for $f$ under smoothness constraints on $f$ and $K$ so that the interplay between $n^{-1/2}$ and $\delta$ can be explicitly related in the asymptotic $\delta \rightarrow 0$ and $n \rightarrow \infty$; this includes the comparison with earlier results of \cite{Neumann, EK, HR} in the context of blockwise SVD.
\item [iii)] Application to spherical deconvolution on $\mathbb{S}^2$ or circular deconvolution on the torus; this includes the discussion of our findings in terms of the existing literature on the topic \cite{CR, KPP, KK} and some practical aspects of numerical implementation.
\end{itemize}
\subsection{Main results and organisation of the paper}
In Section \ref{Estimation by blockwise SVD}, we present an abstract framework that allows for operators $K$ to admit a so-called blockwise SVD. This property is simply turned into the existence of pairs of increasing finite dimensional spaces ($H_\ell,\; G_\ell)$ that are stable under the action of $K$. 
%We do not require any symmetry property at this stage. 
The blockwise SVD property is further appended with a smoothness condition quantified by the arithmetic decay of the operator norm of $K$ and its inverse on $H_\ell$ (resp. $G_\ell$) (the so-called {\it ordinary smooth} assumption, see {\it e.g.} \cite{Tsyb}). By means of a reconstruction formula, we obtain in Section \ref{def estimator} an estimator $\widehat f_{n,\delta}$ of $f$ by first inverting $K_\delta$ on $H_\ell$ with a thresholding tuned with $\delta$ and then filter the resulting signal by a block thresholding tuned with $n^{-1/2}$. As for i) and ii), we establish in Theorems \ref{upper bounds} and \ref{borne inf} of Section \ref{main results} the minimax rates of convergence for Sobolev constraints on $f$ under squared error loss and we demonstrate that $\widehat f_{n,\delta}$ is optimal and adaptive to within logarithmic terms. The explicit interplay between $\delta$ and $n^{-1/2}$ is revealed and discussed in the case of sparse operator when $n^{-1/2} \ll \delta$, completing earlier findings in \cite{EK, HR} and to some extent \cite{Neumann} in the univariate case for density deconvolution. In particular, we demonstrate that a certain parametric regime dominates when the smoothness of the signal dominates the smoothing properties of the operator. Concerning iii), the method is applied to the case of spherical and circular deconvolution in Section \ref{examples} where harmonic Fourier analysis enables to provide explicit blockwise SVD for the convolution operator. We illustrate the numerical feasability of $\widehat f_{n,\delta}$ and the phenomena that appear in the case $n^{-1/2} \ll \delta$. Section \ref{proofs} is devoted to the proofs.

We choose to state and prove our results in the white Gaussian model generated by the observation of $y_n$ and $K_\delta$ defined by \eqref{Gaussian deconvolution} and \eqref{Kdelta}. The extension to the case of density estimation, when $y_n$ is replaced by the observation of a random sample of size $n$ drawn from the distribution $Kf$, like for instance in \cite{Neumann, ComteLacour} is a bit more involved, due to the intrinsic heteroscedasticity that appears when enforcing a formal analogy with the Gaussian setting \eqref{Gaussian deconvolution}. It is briefly addressed in the discussion Section \ref{discussion}
\section{Estimation by blockwise SVD} \label{Estimation by blockwise SVD}
\subsection{The blockwise SVD property} \label{blockwise SVD property}
Let ${\mathcal G}$ denote a family of linear operators
$$K :  \bH \rightarrow  \G $$
between two Hilbert spaces $\bH$ and $\G$ that shall represent our parameter set of unknown $K$.
% $\bH$ and $\G$ are 2 Hilbert spaces.
%If we have in mind the example of the convolution $Kf=g \star f$ described in Section \ref{motivation}, then there is a one-to-one correspondence between $K$ and $g$ and ${\mathcal G}$ can be identified with a set of functions $g $. For circular deconvolution, we then take ${\mathcal G} \subset L^2({\mathbb T}^d)$ whereas ${\mathcal G} \subset L^2\big(\SO\big)$ for spherical deconvolution. However, unless specified otherwise like in the examples Section \ref{examples}, we need not restrict ourselves to convolution operators.  
 
A fundamental property (Assumption \ref{blockwise SVD} below) is that an explicit singular value decomposition (SVD) or blockwise SVD is known for all $K \in {\mathcal G}$ simultaneously. More specifically, we suppose that there exist two explicitly known bases
$(e_\lambda, \; \lambda\; \in \Lambda)$ of $\bH$ and $(g_\lambda, \; \lambda\; \in \Lambda)$ of $\G$, as well as a partition of $\Lambda=\cup_{\ell\geq 1}\Lambda_\ell$ with $\Lambda_\ell\cap\Lambda_{\ell'}=\emptyset$ if $\ell\not=\ell'$, and a constant $d \geq 1$ such that:
$$\ell^{d-1} \lesssim |\Lambda_\ell| \lesssim \ell^{d-1},$$
where $\lesssim$ means inequality up to a multiplicative constant that does not depend on $\ell$.  Here $|\Lambda_\ell| = \text{Card}(\Lambda_\ell)$. 

It is worthwhile to notice that in our examples as well as in the rates of convergence that we will exhibit later, $d$ plays the role of a dimension. In particular, $d=1$ corresponds to a 'standard SVD', whereas $d>1$  creates blocks and deserves the name of 'blockwise' SVD.
However, there is no need in the paper to assume that $d$ is in $\N$. Set 
$$H_\ell=\mathrm{Span}\{e_\lambda,\lambda \in \Lambda_\ell\}\;\;\text{and}\;\;G_\ell=\mathrm{Span}\{g_\lambda, \lambda \in \Lambda_\ell\}.$$
The Galerkin projection of an operator $T:\bH \rightarrow \G$ onto $(H_\ell,G_\ell)$ is defined by $T_\ell=P_\ell T_{|H_\ell}$, where $P_\ell $ is the orthogonal projector onto $G_\ell$. 
\begin{assumption}[Blockwise SVD] \label{blockwise SVD}
$$
{K}_{|H_\ell} = K_\ell\;\;\;\text{for every}\;\;K\in {\mathcal G},\ell\geq 1.$$
\end{assumption}

%%%%%%%%%%%%%%%%%%%%%%%%%%%%%%%%%%%%%%%

%for $d \geq 1$, consider multi-indices $\lambda$ of the form  
%$$\lambda = (k_1,\ldots, k_d,\ell) \in {\mathbb N}^d \times {\mathbb N}.$$
%We set $|\lambda|=\ell$. For $\ell \geq 1$, let $\Lambda_\ell$ denote a set of multi-indices $\lambda$ such that $|\lambda| = \ell$ and
%
%%stands for the cardinal of $\Lambda_\ell$. 
%Putting $\Lambda = \cup_{\ell \geq 1} \Lambda_\ell$, let $(e_\lambda, \lambda \in \Lambda)$ denote an orthonormal basis of $L^2(\mu)$ so that 
%$$f = \sum_{\ell \geq 1} \sum_{\lambda \in \Lambda_\ell} \langle f, e_\lambda \rangle\, e_\lambda\;\;\text{
%for every}\;\; f \in L^2(\mu),$$
%where $\langle \cdot,\cdot\rangle$ denotes the inner product in $L^2(\mu)$. 

We further need to quantify the action of $K$ on $H_\ell$. We denote by $\|T_\ell\|_{H_\ell \rightarrow G_\ell}=\sup_{v \in H_\ell,\|v\|_{\bH}=1}\|T_\ell v\|_{\G}$ the operator norm of $T_\ell$.
\begin{assumption}[Spectral behaviour of ${K}_{|H_\ell}$] 
\label{spectral behaviour of K}
For every $\ell \geq 1$, $K_\ell$ is invertible and there exists $\nu \geq 0$ such that 
$$
Q_1(K)=\sup_{\ell \geq 1}\ell^{-\nu}\|(K_\ell)^{-1}\|_{G_\ell \rightarrow H_\ell} <\infty$$
and
$$Q_2(K)=\sup_{\ell \geq 1}\ell^{\nu}\|K_\ell\|_{H_\ell \rightarrow G_\ell} <\infty$$
for every $K \in {\mathcal G}$.
\end{assumption}
We associate with the bases $(e_\lambda, \lambda \in \Lambda)$ and $(g_\lambda, \lambda \in \Lambda)$ the following decompositions
$$f = \sum_{\ell \geq 1} \sum_{\lambda \in \Lambda_\ell} \langle f, e_\lambda \rangle\, e_\lambda,\;\; g= \sum_{\ell \geq 1} \sum_{\lambda \in \Lambda_\ell} \langle g, g_\lambda \rangle\, g_\lambda
\;\;\text{
for every}\;\; f \in\bH, \; g \in \G,$$
where $\langle \cdot,\cdot\rangle$ denotes the inner product either in $\bH$ or $\G$  and the scale of Sobolev spaces 
\begin{align} \label{Sobolev}
\mathcal{W}^s &= \Big\{f\in \bH,\;\;\|f\|_{\mathcal{W}^s}^2=\sum_{\ell \geq 1}\ell^{2s}\sum_{\lambda \in \Lambda_\ell} \langle f,e_\lambda\rangle^2 <\infty\Big\},\;\;s\in \R,
\\
\mathcal{\tilde W}^s &= \Big\{g\in \G,\;\;\|g\|_{\mathcal{\tilde W}^s}^2=\sum_{\ell \geq 1}\ell^{2s}\sum_{\lambda \in \Lambda_\ell} \langle g,g_\lambda\rangle^2 <\infty\Big\},\;\;s\in \R. \nonumber
\end{align}
%where, for a multi-index $\lambda = (k_1,\ldots, k_d,\ell)$, we set $|\lambda|=\ell$.  
For $\nu \geq 0$, Assumption \ref{spectral behaviour of K} implies that $K:{\mathcal W}^{-\nu/2} \rightarrow \mathcal {\tilde W}^{\nu/2}$
 is continuous.
%  and that $\langle Kf,f\rangle \sim \|f\|_{{\mathcal W}^{-\nu/2}}^2$, where $\sim$ means inequality in both ways up to a constant, see {\it e.g.} \cite{CHR}. 
In particular, when $\nu >0$, the operator $K$ is ill-posed 
%in the terminology of Whaba \cite{W} 
with degree $\nu$, see for instance \cite{NP}. 
%Another consequence of Assumption \ref{spectral behaviour of K} is that the product $Q_1(K)Q_2(K)$ is a kind of uniform condition number linked to the finite dimensional operators $K_\ell$ that is finite. 
\subsection{Blockwise SVD reconstruction with noisy data} \label{def estimator}
Under Assumption \ref{blockwise SVD} and \ref{spectral behaviour of K}, we have the  reconstruction formula
\begin{equation} \label{reconstruction formula}
f = \sum_{\ell \geq 0} (K_{\ell})^{-1} \sum_{\lambda \in \Lambda_\ell} \langle Kf,g_\lambda\rangle\, e_\lambda.
\end{equation}
By the observed blurred version $K_{\delta}$ of $K$ in \eqref{Kdelta}, we obtain a 
family of estimators of $(K_{\ell})^{-1}$  from data \eqref{Kdelta} by considering the operator 
\begin{equation} \label{empirical operator}
{\bf 1}_{\big\{\|(K_{\delta,\ell})^{-1}\|_{G_\ell \rightarrow H_\ell} \leq \kappa\big\}}(K_{\delta,\ell})^{-1} ,
\end{equation}
where $\kappa >0$ is a cutoff level, possibly depending on $\ell$. Likewise, the coefficient  $\langle Kf,g_\lambda \rangle$ can be estimated by
\begin{equation} \label{empirical coeff}
z_{n,\lambda} := \langle y_n, g_\lambda\rangle.
\end{equation}
Mimicking the reconstruction formula \eqref{reconstruction formula} with the estimates \eqref{empirical coeff} and \eqref{empirical operator}, we finally obtain a (family of) estimator(s) of $f$ by setting
$$
\widehat f_{n,\delta} =
\sum_{0 \leq \ell \leq L} \big({K}_{\delta, \ell}\big)^{-1} \big(\sum_{\lambda \in {\Lambda}_{\ell}} z_{n,\lambda} e_\lambda  {\bf 1}_{\big\{\sum_{\lambda \in \Lambda_\ell} z_{n,\lambda}^2 \geq \, \tau_\ell^2\big\}} \big){\bf 1}_{{\mathcal E}_{\delta, \ell}(\kappa_\ell)}$$
where 
$${\mathcal E}_{\delta, \ell}(\kappa_\ell) = \big\{\|(K_{\delta,\ell})^{-1}\|_{G_\ell \rightarrow H_\ell} \leq \kappa_\ell\big\}.$$ 
The procedure is specified by the maximal frequency level $L$ and the threshold levels 
\begin{equation} \label{def kappa ell}
\kappa_\ell = \Big(\lambda_0 |\Lambda_\ell|^{-1/2}\big(\delta^2 |\log \delta|\big)^{-1/2} \Big) \bigwedge  n^{1/2}
\end{equation}
and
\begin{equation} \label{def tau ell}
\tau_\ell=\mu_0 |\Lambda_\ell|^{1/2} \big(n^{-1}\log n\big)^{1/2},
\end{equation}
for some prefactors $\lambda_0,\mu_0>0$. The threshold rule we introduce in both the signal (with level $\tau_\ell$) and the operator (with level $\kappa_\ell$) is inspired by classical block thresholding \cite{HKP, CZ, C} and will enable to adapt with respect to the smoothness properties of both the signal $f$ and the operator $K$, see below. 
\section{Main results} \label{main results}
\subsection{Minimax rates of convergence}
We assess the performance of the estimator $\widehat f_{n,\delta}$ defined in Section \ref{def estimator} over the Sobolev spaces linked to the basis $(e_\lambda, \lambda \in \Lambda)$ defined in \eqref{Sobolev}. 
Define the Sobolev balls ${\mathcal W}^s(M) = \{f\in {\mathcal W}^s,\;\|f\|_{{\mathcal W^s}}\leq M\}$ for $M>0$ and let  
\begin{equation} \label{def classe operator}
{\mathcal G}^{\nu}(Q)=\big\{K \in {\mathcal G},\;\;Q_i(K)\leq Q_i,i=1,2\big\}.
\end{equation}
for $Q=(Q_1,Q_2)$ with $Q_1>0$, $Q_1Q_2\geq 1$, where the mapping constants $Q_i(K)$ are defined in Assumption \ref{spectral behaviour of K}. 
\begin{theorem}[Upper bounds] \label{upper bounds} Let ${\mathcal G}$ be a class of operators satisfying Assumptions \ref{blockwise SVD} and \ref{spectral behaviour of K}. Assume we observe $(y_n,K_{\delta})$ given by \eqref{Gaussian deconvolution} and \eqref{Kdelta}, with $n \geq 1$ and $\delta \leq \delta_0 < 1$. Specify $\widehat f_{n,\delta}$ with 
\begin{equation} \label{choice of L}
L= \lfloor (\delta^2)^{-1/(2\nu+d-1)}\rfloor \bigwedge \lfloor n^{1/(2\nu+d)}\rfloor
\end{equation}
and $\kappa_\ell, \tau_\ell$ as in \eqref{def kappa ell} and \eqref{def tau ell}. 
For sufficiently small $\lambda_0$ and sufficiently large $\mu_0$, for every $s,M>0$, $Q=(Q_1,Q_2)$ with $Q_1>0$ and such that $Q_1Q_2\geq 1$, 
we have
\begin{align}
& \sup_{f\in {\mathcal W}^{s}(M),  K\in {\mathcal G}^\nu(Q)}
 \E\Big[\big\|\widehat f_{n,\delta}-f\big\|_{\bH}^2\Big] \nonumber \\ 
 \lesssim & \big(\delta^2 |\log\delta|\big)^{1 \bigwedge2s/(2\nu+d-1)}
\bigvee \big(n^{-1}\log n\big)^{2s/(2(s+\nu)+d)} \label{the upper bound}
\end{align}
where $\lesssim$ means inequality up to a multiplicative constant that depends on $d, s,\nu,M,Q$ and $\mu_0,\lambda_0$ only.
%, provided $\lambda_0$ is sufficiently small and $\mu_0$ sufficiently large. 
\end{theorem}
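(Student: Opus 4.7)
The plan is to decompose the squared error block by block, apply the identity furnished by the blockwise SVD on each block, and carefully balance signal-noise, operator-noise, and bias contributions. Writing $f_\ell$ for the orthogonal projection of $f$ onto $H_\ell$ and $\widehat f_{n,\delta,\ell}$ for the $\ell$-th block of the estimator, one has
$$\|\widehat f_{n,\delta}-f\|_{\bH}^2=\sum_{\ell>L}\|f_\ell\|^2+\sum_{\ell\le L}\|\widehat f_{n,\delta,\ell}-f_\ell\|^2.$$
The tail is immediate: $\sum_{\ell>L}\|f_\ell\|^2\le M^2L^{-2s}$ is already dominated by the right-hand side of \eqref{the upper bound} under the choice \eqref{choice of L}.

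For $\ell\le L$, Assumption \ref{blockwise SVD} gives, on the good event $\mathcal E_{\delta,\ell}(\kappa_\ell)$,
$$(K_{\delta,\ell})^{-1}(y_n)_\ell-f_\ell=-\delta\,(K_{\delta,\ell})^{-1}\dot B_\ell f_\ell+n^{-1/2}\,(K_{\delta,\ell})^{-1}\dot W_\ell,$$
which follows from $K_{\delta,\ell}-K_\ell=\delta\dot B_\ell$. A Neumann expansion of $(K_\ell+\delta\dot B_\ell)^{-1}$ on the auxiliary event $\{\delta\|\dot B_\ell\|_{\mathrm{op}}\le (2Q_1\ell^\nu)^{-1}\}$, combined with Gaussian operator-norm concentration of $\dot B_\ell$ at scale $\sqrt{|\Lambda_\ell||\log\delta|}$, will show that $\|(K_{\delta,\ell})^{-1}\|\le 2Q_1\ell^\nu$ with probability $\ge 1-C\delta^p$ for any prescribed $p\ge 1$; by the definition of $\kappa_\ell$ and the choice of $L$, this auxiliary event is a subset of $\mathcal E_{\delta,\ell}$. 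Chi-squared concentration for $\sum_{\lambda\in\Lambda_\ell}z_{n,\lambda}^2$ analogously controls the block-thresholding misclassification: taking $\lambda_0$ small and $\mu_0$ large, the probability that $A_\ell$ is wrongly decided is $\le C(\delta^p\vee n^{-p})$ outside a small intermediate band for $\|f_\ell\|$. Combining these with the independence of $\dot W_\ell$ from $\dot B_\ell$ and the Gaussian moment identity $\mathbb E\|\dot B_\ell f_\ell\|^2=|\Lambda_\ell|\|f_\ell\|^2$ yields the block-level bound
$$\mathbb E\|\widehat f_{n,\delta,\ell}-f_\ell\|^2\lesssim \bigl(n^{-1}\ell^{2\nu}|\Lambda_\ell|+\delta^2\ell^{2\nu}|\Lambda_\ell|\,\|f_\ell\|^2\bigr)\wedge\|f_\ell\|^2,$$
up to a negligible $(\delta^p\vee n^{-p})$ remainder absorbed by the $\|f_\ell\|^2\le M^2\ell^{-2s}$ Sobolev cap.

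Summing over $\ell\le L$ produces the two competing contributions of \eqref{the upper bound}. The signal-noise part $\sum_{\ell\le \ell_\star}n^{-1}\ell^{2\nu+d-1}+\sum_{\ell>\ell_\star}\|f_\ell\|^2$ is minimised at $\ell_\star\asymp (n/\log n)^{1/(2(s+\nu)+d)}$, yielding the classical nonparametric rate $(n^{-1}\log n)^{2s/(2(s+\nu)+d)}$, the $\log n$ originating from the threshold $\tau_\ell$ in \eqref{def tau ell}. For the operator-noise part, using $|\Lambda_\ell|\asymp\ell^{d-1}$, the series $\delta^2\sum_\ell\ell^{2\nu+d-1}\|f_\ell\|^2$ either saturates to $\delta^2|\log\delta|\cdot\|f\|_{\mathcal W^s}^2$ via a Cauchy–Schwarz use of the Sobolev constraint when $s>(2\nu+d-1)/2$ (giving the parametric rate $\delta^2|\log\delta|$), or is balanced by the tail bias $L^{-2s}$ at $L\asymp(\delta^2|\log\delta|)^{-1/(2\nu+d-1)}$ (giving the nonparametric rate $(\delta^2|\log\delta|)^{2s/(2\nu+d-1)}$). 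Taking the maximum of the two rates produces the bound \eqref{the upper bound}.

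The main technical obstacle is the joint tuning of $\kappa_\ell,\tau_\ell,\lambda_0,\mu_0$ so that the two independent Gaussian noises $\dot B$ and $\dot W$ are controlled block by block with the desired polynomial-in-$(\delta,n^{-1})$ decay of all misclassification probabilities, while only paying the logarithmic overheads $|\log\delta|$ and $\log n$ in the principal variance terms. The blockwise SVD property is essential here: it reduces the joint control to independent $|\Lambda_\ell|\times|\Lambda_\ell|$ Gaussian matrices and $|\Lambda_\ell|$-dimensional Gaussian vectors, whose operator-norm and chi-squared concentration inequalities exactly match the scalings built into \eqref{def kappa ell}–\eqref{def tau ell}. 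The parametric saturation at $s=(2\nu+d-1)/2$ further demands a second, Cauchy–Schwarz use of the Sobolev constraint, which is precisely what distinguishes the SVD-based rate from the Galerkin rate obtained in \cite{EK, HR}.
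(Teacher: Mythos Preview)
Your sketch is correct and follows essentially the same route as the paper: blockwise error decomposition, a Neumann-series control of $(K_{\delta,\ell})^{-1}$ on an auxiliary good event, Gaussian operator-norm concentration for $\dot B_\ell$ and $\chi^2$ concentration for $\dot W_\ell$, and a bias--variance balance at the optimal scale. The paper carries this out via an explicit case analysis (terms $I$ through $IX$, plus $II$ and $III$ for the two threshold-to-zero events) rather than your compact oracle-type block bound, but the ingredients and the final balancing are the same. Your use of the exact second moment $\E\|\dot B_\ell f_\ell\|^2=|\Lambda_\ell|\,\|f_\ell\|^2$ in place of the paper's pointwise bound $\|\delta\dot B_\ell\|_{\mathrm{op}}\le a_\ell$ on the auxiliary event is a legitimate and slightly cleaner variant for the operator-noise term.

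Two places deserve tightening. First, the claimed inclusion ``auxiliary event $\subset\mathcal E_{\delta,\ell}(\kappa_\ell)$'' fails for $\ell$ in a band near $L$: one has $2Q_1\ell^\nu\le\kappa_\ell$ only for $\ell\lesssim(\delta^2|\log\delta|)^{-1/(2\nu+d-1)}$, not all the way to $L=\lfloor(\delta^2)^{-1/(2\nu+d-1)}\rfloor$. Those remaining blocks must be handled via the bias cap $\|f_\ell\|^2$, and this boundary band is precisely where the $|\log\delta|$ factor in the final rate originates; the paper treats this in its term $II$ via an elementary perturbation lemma (if $\|(K_{\delta,\ell})^{-1}\|_{\mathrm{op}}>\kappa_\ell$ then either $\|(K_\ell)^{-1}\|_{\mathrm{op}}\ge\kappa_\ell/2$ or $\|\delta\dot B_\ell\|_{\mathrm{op}}\ge\kappa_\ell^{-1}$). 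Second, in the parametric regime $2s\ge 2\nu+d-1$ no Cauchy--Schwarz is needed; the bound $\sum_\ell \ell^{2\nu+d-1}\|f_\ell\|^2\le\|f\|_{\mathcal W^s}^2$ follows directly from $\ell^{2\nu+d-1}\le\ell^{2s}$.
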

The bounds for $\mu_0$ and $\lambda_0$ are explicitly computable. In the model generated by $y_n$ in \eqref{Gaussian deconvolution} and $K_{\delta}$ in \eqref{Kdelta}, they depend on the dimension $d$ and on the absolute constants $c_0$ and $c_1$ of the concentration lemmas \ref{first ineq} and \ref{first ineq vector} below. However, they are in practice much too conservative, as is well known in the signal detection case \cite{DJ} or the classical inverse problem case \cite{AS}, see the numerical implementation Section \ref{examples}. 

Our next result shows that the rate achieved by $\widehat f_{n,\delta}$ is indeed optimal, up to logarithmic terms. The lower bound in the case $\delta = 0$ is classical (Nussbaum and Pereverzev \cite{NP}) and will not decrease for increasing noise levels $\delta$ or $n^{-1/2}$ whence it suffices to provide the case which formally corresponds to observing $Kf$ without noise while $K$ remains unknown.
\begin{theorem}[Lower bounds] \label{lower bounds}  In the same setting as Theorem \ref{upper bounds}, with in addition $Q_2>1/Q_1$, assume we observe $Kf$ exactly and $K_{\delta}$ given by \eqref{Kdelta}. For sufficiently small $\delta$, we have
\begin{equation} \label{borne inf}
\inf_{\widehat f} \sup_{f\in {\mathcal W}^{s}(M), K\in {\mathcal G}^\nu(Q)}\E\Big[\big\|\widehat f-f\big\|_{\bH}^2\Big] \gtrsim \big(\delta^2\big)^{1 \bigwedge2s/(2\nu+d-1)}
\end{equation}
where $\gtrsim$ means inequality up to a positive multiplicative constant that depends on $d, s,\nu, M$ and $Q$ only.

\end{theorem}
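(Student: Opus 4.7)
The plan is to apply Fano's inequality to a multi-hypothesis family $\{(K_i,f_i)\}_{i=1}^{M}\subset \mathcal{G}^{\nu}(Q)\times \mathcal{W}^s(M)$ arranged so that $K_i f_i = y^\star$ is the same for all $i$. Since the deterministic observation $Kf$ takes the common value $y^\star$ under each hypothesis, it cannot distinguish them, and the entire Kullback--Leibler budget reduces to $\|K_i-K_j\|_{\mathrm{HS}}^2/(2\delta^2)$, coming from $K_\delta$ alone.

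In the main (non-parametric) regime $2s \leq 2\nu+d-1$ with $d \geq 2$, set $L_\star = \lfloor c_0 \delta^{-2/(2\nu+d-1)}\rfloor$, the oracle scale suggested by Theorem~\ref{upper bounds}. Fix a unit vector $\bar v \in H_{L_\star}$ and, by a standard volume argument, pick a $c_1$-packing $\{w_i\}_{i=1}^M$ of the unit sphere of $\bar v^\perp\cap H_{L_\star}$ with $M \geq \exp(c_2 L_\star^{d-1})$. For a small $\epsilon > 0$ to be tuned, set $v_i = \bar v + \epsilon w_i$, $f_i = a v_i$ with $a = c_3 L_\star^{-s}$, let $\bar K e_\lambda = c\,\ell^{-\nu} g_\lambda$ with $c \in (1/Q_1,Q_2)$---possible precisely because $Q_1 Q_2 > 1$---and put $y^\star = a\, \bar K \bar v$. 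Define the rank-one perturbation
\[
T_i h \;=\; \langle h,\, v_i/\|v_i\|^2 \rangle \cdot \bigl(-\epsilon\, c\, L_\star^{-\nu} J w_i\bigr),
\]
where $J\colon e_\lambda \mapsto g_\lambda$ is the basis isometry, and put $K_i = \bar K + T_i$. A direct computation verifies $K_i f_i = y^\star$ and, crucially, $\|T_i\|_{\mathrm{HS}} = \|T_i\|_{\mathrm{op}} \leq \epsilon c L_\star^{-\nu}$, independent of $a$. A Neumann-series argument then shows $K_i \in \mathcal{G}^{\nu}(Q)$ whenever $\epsilon$ is smaller than a constant controlled by the slackness $Q_1 Q_2 - 1$; the Sobolev bound $\|f_i\|_{\mathcal{W}^s}^2 = (1+\epsilon^2) L_\star^{2s} a^2 \leq M^2$ is secured by $c_3$.

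The pairwise KL is at most $\|K_i-K_j\|_{\mathrm{HS}}^2/(2\delta^2) \leq 2 \epsilon^2 c^2 L_\star^{-2\nu}/\delta^2$ by the triangle inequality. Choosing $\epsilon^2 = c_4\, \delta^2 L_\star^{2\nu+d-1}$ makes this of order $c_4' L_\star^{d-1} \lesssim c_4' \log M$; by the very definition of $L_\star$, $\epsilon$ is then of order a constant, compatible with the class-membership constraint on $\epsilon$ for $c_4$ small. Fano's inequality (e.g.~Tsybakov, Theorem~2.5) then yields
\[
\inf_{\widehat f}\sup_i \E\big[\|\widehat f - f_i\|_{\bH}^2\big] \;\gtrsim\; \min_{i \neq j} \|f_i - f_j\|^2 \;=\; a^2 \epsilon^2 \min_{i\neq j}\|w_i - w_j\|^2 \;\gtrsim\; L_\star^{-2s} \;\sim\; \delta^{4s/(2\nu+d-1)},
\]
as claimed. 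The parametric regime $2s > 2\nu+d-1$ and the special case $d=1$ (where $|\Lambda_{L_\star}| \sim 1$ precludes any non-trivial packing) follow from the analogous two-point Le Cam argument, with $L$ of constant order in the parametric case and $L \sim \delta^{-1/\nu}$ when $d=1$.

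The main technical obstacle is the design of the rank-one perturbation $T_i$: by placing its input side along $v_i$ and its output side along the ``error direction'' $J w_i$, the Hilbert--Schmidt norm of $T_i$ is decoupled from the amplitude $a$ of $f_i$, which is exactly what makes the Fano KL constraint and the Sobolev constraint simultaneously saturable at the oracle scale $L_\star$. Verifying $K_i \in \mathcal{G}^{\nu}(Q)$ requires the baseline scaling $c \in (1/Q_1, Q_2)$ afforded by the strict slackness $Q_1 Q_2 > 1$ assumed in the theorem statement.
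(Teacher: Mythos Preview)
Your argument is correct and takes a genuinely different route from the paper. The paper puts a Gaussian prior on $\boldsymbol K_\ell$ (centered at $\boldsymbol K_\ell^0=c_1\ell^{-\nu}\boldsymbol I_\ell$ with $c_1\in(1/Q_1,Q_2)$), fixes $\boldsymbol f_\ell=\pi_\ell\boldsymbol K_\ell^{-1}\boldsymbol g_\ell$ so that the observed $\boldsymbol K_\ell\boldsymbol f_\ell$ is deterministic, linearises $\boldsymbol K_\ell^{-1}-(\boldsymbol K_\ell^0)^{-1}$ by a Neumann expansion, and applies Anderson's lemma to the resulting Gaussian posterior; the technical work is in controlling the quadratic remainder $\boldsymbol\zeta_{\delta,\ell}$ and showing $\PP(\boldsymbol K_\ell\notin\mathcal M_\ell^\nu(Q))$ is small. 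You instead build a deterministic packing via rank-one perturbations $T_i$ engineered so that $K_if_i$ is constant, and invoke Fano. Both approaches exploit the slackness $Q_1Q_2>1$ to place a baseline $c\in(1/Q_1,Q_2)$, and both localise at the same scale $L_\star\asymp\delta^{-2/(2\nu+d-1)}$. Your construction has the advantage that class membership is verified deterministically (via the explicit bound on $\epsilon$), and the decoupling of $\|T_i\|_{\mathrm{HS}}$ from the signal amplitude $a$ is a clean device; the cost is the need to treat $d=1$ and the parametric regime separately by Le~Cam, whereas the paper's Bayesian argument handles all regimes uniformly since it never relies on $|\Lambda_\ell|$ being large. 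Both are complete proofs of the stated bound.
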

Combining \eqref{the upper bound} together with \eqref{borne inf} and the results of \cite{NP}, we conclude that $\widehat f_{n,\delta}$ is minimax over ${\mathcal W}^{s}(M)$ to within logarithmic terms in $n$ and $\delta$, and that this result is uniform over the nuisance parameter $K\in {\mathcal G}^\nu(Q)$.
\subsection{Discussion} \label{discussion}
\subsubsection*{The case of diagonal operators}
It is interesting to notice that the condition  $Q_2>1/Q_1$ in Theorem \ref{lower bounds} excludes the case where $K$ is diagonal.
In this particular case, considered especially in the deconvolution example of Section \ref{exemple deconvolution circulaire} below, a closer inspection of the proof of the upper bound shows that the rate 
$$n^{-s/(2(s+\nu)+d)} \bigvee \delta^{1 \bigwedge s/\nu}$$ can be obtained (up to some extra logarithmic factors) as in the case where $d=1$, which improves on the rate 
$$n^{-s/(2(s+\nu)+d)} \bigvee \delta^{1 \bigwedge 2s/(2\nu+d-1)}$$ provided by Theorem \ref{upper bounds}. This sheds some light on the role of the number $d$. It is in fact twofolds: it  acts as a 'dimension' in the term $n^{-2s/(2(s+\nu)+d)}$; in the term involving error in the operator $\delta$, it reflects  the distance to the diagonal case expanding from $ \delta^{1 \bigwedge s/\nu}$ in the diagonal case, to $\delta^{1 \bigwedge2s/(2\nu+d-1)}$ in the case $Q_2>1/Q_1$. It is very plausible, though beyond the scope of this paper, to  express conditions on $K$ leading to rates of the form $2s/(2\nu+\alpha)$, with $\alpha$ continuously varying from 0 to $d-1$. 
%{\color{red} Cette partie peut aussi etre injectee dans la discussion juste en dessous ou en section 4 au choix...}
Note that in the case $d=1$, we recover  the minimax rate of density deconvolution with unknown error as proved by Neumann \cite{Neumann}, see also \cite{ComteLacour}.
\subsubsection*{Relation to other works in the case of sparse operators}  For an unknown signal $f$ with smoothness $s>0$ and unknown operator with degree of ill-posedness $\nu \geq 0$, the optimal rates of convergence are 
\begin{equation} \label{rate inverse}
n^{-\alpha(s,\nu)/2} \bigvee \delta^{\beta(s,\nu)},
\end{equation}
up to inessential logarithmic terms. The exponents $\alpha(s,\nu)$ and $\beta(s,\nu)$ are linked respectively to the error in the signal $y_n$ and the error in the operator $K_\delta$. Efromovitch and Kolchinskii \cite{EK} established that under fairly general conditions on the operator $K$, the optimal exponents are given by
$$\alpha(s,\nu) = \beta(s,\nu) = \frac{2s}{2(s+\nu)+d}.$$
They noted however that if certain sparsity properties on $K$ are moreover assumed (and that we shall not describe here, for instance if $K$ is diagonal in an appropriate basis) then the exponent $\beta(s,\nu) = \tfrac{2s}{2(s+\nu)+d}$ is no longer optimal, while $\alpha(s,\nu)$ remains unchanged. 

In the related context of operators acting on Besov spaces $B^s_{p,p}([0,1]^d)$ of functions with smoothness $s$ measured in $L^p$-norm,  Hoffmann and Rei\ss\, \cite{HR} introduce an {\it ad hoc} hypothesis on the sparsity of the unknown operator (that we shall not describe here either), expressed in terms of the wavelet discretization of $K$.
% on the behaviour of the coefficients $\|K\psi_{\lambda}\|$ of the operator $K$ in a wavelet basis $(\psi_\lambda)$ when $\bH=\G = L^2([0,1]^d)$. 
They subsequently obtain new rates of convergence for a certain nonlinear wavelet procedure, and these rates overperform \eqref{rate inverse} as expected from the results by \cite{EK}. In particular, if one considers the estimation of $f \in B^s_{2,2}$, in the extreme case where the operator $K$ is diagonal in a wavelet basis, 
% are identical and $(e_{\lambda},\lambda \in \Lambda)$ is a wavelet basis. 
%Although stated in a more general framework, their condition  (Assumption 5.4 in their paper) nevertheless applies in our setting and reads
%$$\|Ke_{\lambda}\|_{{\mathcal W}^{\bar s +\nu}} \lesssim |\lambda|^{\bar s}\;\;\text{for some}\;\;\bar s \geq 0.$$
%
%In particular, the case where the space $B^s_{2,2}={\mathcal W}^s$, for instance They construct  
%that if $\beta(\bar s,\nu) \leq (2s-d)/(2\nu+d)$, 
the procedure in \cite{HR} achieves the rate
%$n^{-\alpha(s,\nu)/2} \bigvee \delta^{\beta(\bar s,\nu)}$ and this rate may improve dramatically  on $n^{-\alpha(s,\nu)/2} \bigvee \delta^{\beta(s,\nu)}$. In the extreme case where $K$ is diagonal in a wavelet basis, by developing further the sparsity Assumption 5.4 in a Besov space setting, it is proved in \cite{HR} that the rate
\begin{equation} \label{rate HR}
n^{-\alpha(s,\nu)/2} \bigvee (\delta^2)^{1 \bigwedge (s-d/2)/\nu}
\end{equation}
up to extra logarithmic terms.  We may compare our results with the rate \eqref{rate HR}. In our setting, if we pick $(e_\lambda, \lambda \in \Lambda)$ as the Fourier basis described in Section \ref{exemple deconvolution circulaire}, then we have ${\mathcal W}^s=B^s_{2,2}([0,1]^d)$. Assuming  $K$  to be diagonal in the basis $(e_\lambda, \lambda \in \Lambda)$ which is the exact counterpart of the approach of Hoffmann and Rei\ss\, with $K$ being diagonal in a wavelet basis, then by Theorem \ref{upper bounds}, 
 %However, the procedure in \cite{HR} is agnostic to the diagonal structure of $K$. We show in Theorem \ref{upper bounds} that in our specific setting, by taking advantage of the blockwise SVD property of Assumption \ref{blockwise SVD}, 
 our estimator $\widehat f_{n,\delta}$ (nearly) achieves the rate 
$$n^{-\alpha(s,\nu)/2} \bigvee (\delta^2)^{1 \bigwedge 2s/(2\nu+d-1)}$$ 
which already outperforms the rate \eqref{rate HR} whenever the error in the signal $y_n$ is dominated by the error in the operator and $s$ is small compared to $\nu$, as follows from the elementary inequality
 $$2s/(2\nu+d-1) >  (s-d/2)/\nu\;\;\text{for}\;\;2\nu+d-1 \geq 2s.$$
The superiority of the blockwise SVD in this setting is explained by the fact that the wavelet procedure in \cite{HR} is agnostic to the diagonal structure of $K$ in the wavelet basis, in contrast to $\widehat f_{n,\delta}$ that takes full advantage of the block structure of $K$. 
%.setting strictly improves on the wavelet setting, whenever the error in the signal $y_n$ is dominated by the error in the operator and $s$ is small compared to $\nu$. 
As already explained in the preceding section, one could actually improve further this result in the specific case of $K$ being diagonal in $(e_\lambda, \lambda \in \Lambda)$ and show that $\widehat f_{n,\delta}$ (nearly) achieves the rate 
$n^{-\alpha(s,\nu)/2} \bigvee (\delta^2)^{1 \bigwedge s/\nu}$, thus deleting the `dimensional effect' of $d$ for the error in the operator.

\subsubsection*{Adaptation over the scales $\{{\mathcal W}^{s}, s>0\}$ and $\{{\mathcal G}^\nu,\nu \geq 0\}$}
The estimator $\widehat f_{n,\delta}$ is fully adaptive over the family of Sobolev balls $\{W^{s}(M),s>0,M>0\}$ (in the sense that $\widehat f_{n,\delta}$ does not require the knowledge of $s$ nor $M$). However,  the knowledge of the degree of ill-posedness $\nu$ of $K$ is required through the choice of the maximal frequency $L$ in \eqref{choice of L}. This restriction can actually be relaxed further in dimension $d\geq 2$. Indeed,  setting formally $\nu=0$ in \eqref{choice of L}, one readily checks that $\widehat f_{n,\delta}$ becomes adaptive over $\{{\mathcal W}^{s}(M), s>0,M>0\}$ and $\{{\mathcal G}^\nu(Q),\nu \geq 0,Q=(Q_1,Q_2), Q_1Q_2 \geq 1\}$ simultaneously. In dimension $d=1$ however, setting $\nu=0$ in \eqref{choice of L} is forbidden, but an alternative adaptivity result can be obtained by taking
$L =\lfloor (\delta^2)^{-1/s_0}\rfloor \wedge n$ for some $s_0>0$, in which case $\widehat f_{n,\delta}$ is fully adaptive over the scale $\{{\mathcal G}^\nu(Q),\nu \geq 0,Q=(Q_1,Q_2), Q_1Q_2 \geq 1\}$ and the restricted family $\{W^{s}(M),s \geq s_0,M>0\}$. 

\subsubsection*{Extension to density estimation}
We briefly show a line of proof for extending Theorem \ref{upper bounds} to the framework of density estimation. Suppose that instead of $y_n$ we observe a random sample  $Z_1,\ldots, Z_n$ drawn from $Kf$ assumed to a probability density.
% w.r.t. $\mu$ on ${\mathcal D}$. 
 By analogy to \eqref{empirical coeff}, we have an estimator of $\PP(g_{\lambda}) = \langle Kf,g_{\lambda}\rangle$ replacing $\langle y_n, g_\lambda \rangle$ with 
$$\PP_n(g_{\lambda})=n^{-1}\sum_{i =1}^n g_\lambda(Z_i).$$
Writing
$$\PP_n(g_{\lambda}) = \langle Kf,g_{\lambda}\rangle + n^{-1/2} \eta_{n,\lambda},$$
with  $\eta_{n,\lambda} = n^{1/2}\big(\PP_n(g_{\lambda})- \PP(g_{\lambda})\big)$,
an inspection of the proof of Theorem \ref{upper bounds} reveals that an extension to the density estimation setting carries over as soon as the vector
$(\eta_{n,\lambda},\lambda \in \Lambda_{\ell})$
satisfies a concentration inequality, namely
$$\exists \beta_1 >0,c_1>0,\;\forall \beta \geq \beta_0,\;\;\PP\Big(|\Lambda_\lambda|^{-1}\sum_{\lambda \in \Lambda}\eta_{n,\lambda}^2\ge \beta^2\Big) \leq \exp\big(-c\beta^2|\Lambda_\ell|\big),$$
see \eqref{first ineq vector} in Lemma \ref{concentration vector gaussian}.
To that end, we may apply a concentration inequality by Bousquet \cite{B} as developed for instance in Massart \cite{M}, Eq (5.51) p. 171. The precise control of this extension requires further properties on the basis $(g_{\lambda}, \lambda \in \Lambda)$ and on the density $Kf$ via the behaviour of $\sum_{\lambda \in \Lambda_\ell}\text{Var}\big(g_\lambda(Z_1)\big)$, see Eq. (5.52) p. 171 in \cite{M}. We do not pursue that here. 
\section{Application to blind deconvolution} \label{examples}

\subsection{Spherical deconvolution} \label{exemple deconvolution spherique}
\begin{proof}[Scientific context]
A common challenge in astrophysics is the analysis of complex data sets
consisting of a number of objects or events such as galaxies of a particular
type or ultra high energy cosmic rays (UHECR) and that are genuinely 
distributed over the celestial sphere. Such objects or events 
are distributed according to a probability density distribution $f$ on the sphere,
depending itself on the physics that governs the production of these objects or events.
For instance, UHECR are particles of unknown nature  arriving
at the earth from apparently random directions of the sky. They
could originate from long-lived relic particles from the Big Bang. Alternatively, they could be generated
by the acceleration of standard particles, such as protons, in
extremely violent astrophysical phenomena.
They could also originate from Active Galactic Nuclei (AGN), or from
neutron stars surrounded by extremely high magnetic fields.
As a consequence, in some hypotheses, the underlying probability distribution for observed UHECRs would be a finite sum of point-like sources. In other hypotheses, the distribution could be uniform, or smooth and correlated with the local distribution of matter in the universe. The distribution could also be a superposition of the above. Identifying between these hypotheses is of primordial importance for understanding the origin and mechanism of production of UHECRs.
The observations, denoted by $X_i$, 
are often perturbated by an experimental noise, say $\ep_i$, 
that lead to the deconvolution problem described in Section \ref{motivation}. Following van Rooj and Ruymgart \cite{RR}, Healy {\it et al.} \cite{Healy1998}, Kim and Koo \cite{KK} and 
Kerkyacharian {\it et al.} \cite{KPP}, we assume the following model: we observe an $n$-sample
$(Z_1,\ldots, Z_n)$ with
$$Z_i = \varepsilon_i X_i,\;\;i=1,\ldots, n$$
%where the $\varepsilon_i$ are random elements in $\SO$, 
where the $X_i$ are distributed on the sphere $\Sph$, with common density $f$ with respect to  the uniform probability distribution $\mu(d\omega)$ on $\Sph$ and independent of the $\varepsilon_i$ that have a common density $g$ with respect to the Haar probability measure $dr$ on the group $\SO$ of $3 \times 3$ rotation matrices. One proves in \cite{Healy1998, KK} that the density of the $Z_i$ is  
\begin{equation} \label{spherical convolution}
Kf(\omega) =g\star f(\omega) := \int_{\SO}g(r)f(r^{-1}x)dr,\;\;\omega\in \Sph
\end{equation}
and we are interested in the case where the exact form  $g$ of the convolution operator $K = g \star \cdot$ is unknown, due for instance to insufficient knowledge of the device that is used to measure the observations. However, we assume approximate knowledge of $g$ through $K_\delta$ as defined in \eqref{Kdelta}.
%see also
%\cite{hell} in another related context.
 \end{proof}
\begin{proof}[Checking the blockwise SVD Assumptions \ref{blockwise SVD} and \ref{blockwise SVD}]
We closely follow the exposition of \cite{Healy1998, KK, KPP} for an overview of Fourier theory on $\Sph$ and $\SO$ in order to establish rigorously the connection to Theorem \ref{upper bounds} and \ref{borne inf}.
%in this case as well as the matrices $\hat K^l$. A more detailed study can be found in \cite{Vilenkin} and \cite{Kim2002}.
Define 
$$u(\phi)=\begin{pmatrix} \cos \phi & -\sin \phi & 0 \\ \sin \phi & \cos \phi &0 \\ 0 &0 & 1 \end{pmatrix}\;\;\text{and}\;\;a(\theta)=\begin{pmatrix} \cos \theta & 0 & \sin \theta \\  0 & 1 &0 \\ - \sin \theta  &0 & \cos \theta \end{pmatrix}$$
where $\phi \in [0,2\pi),\,\theta\in[0,\pi)$. Every rotation $r \in \SO$ has representation 
$r=u(\phi) a (\theta) u(\psi)$ for some $\phi,\psi \in [0,2\pi), \theta\in[0,\pi)$. Define the rotational harmonics 
$$
D^l_{mn}(r)=D^l_{mn}(\phi,\theta,\psi)=e^{-i(m\phi+n\psi)}P^l_{mn}\big(\cos(\theta)\big)
$$
for $l\in\mathbb{N}, -l\le m,n\le l
$
where $P^l_{mn}$ are the second type Legendre functions described in details in \cite{V}.
The $D^l_{mn}$ are the eigenfunctions of the Laplace-Beltrami operator on $\SO$ hence the family $(\sqrt{2l+1}D^l_{mn})$ forms a complete orthonormal basis of $L^2(dr)$ on $\SO$, where $dr$ is the Haar probability measure. Every $h\in L^2(dr)$ has a rotational Fourier transform   
$$
{\mathcal F}(h)^l_{mn} =\int_{\SO} h(u) D^l_{mn}(u) du,\;\;l\in\mathbb{N}, -l\le m,n\le l,
$$
%and the Fourier coefficients ${\mathcal F}(h)^l_{mn} $One may think of $\hat{h}^l, \; l \in \N^*$ as a sequence of matrices $(\hat{h})^l$ in $M_{2l+1}(\mathbb{C})$. Any function $h\in L^2 (\SO)$ satisfies
and for every $h \in L^2(dr)$ we have a reconstruction formula
\begin{align*}
h& =\sum_{l \in \mathbb{N}} \sum_{-l \leq m,n \leq l} {\mathcal F}(h)^l_{mn} \overline{D^l_{mn}} \\
& =\sum_{l \in \mathbb{N}} \sum_{-l \leq m,n \leq l} {\mathcal F}(h)^l_{mn} D^l_{mn}(\cdot^{-1})
\end{align*}
An analogous analysis is available on $\Sph$. Any point $\omega \in \Sph$ is determined by its spherical coordinates
$\omega=\big(\sin(\theta)\cos(\phi),\sin(\theta)\sin(\phi),\cos(\theta)\big)$ for some $\theta \in [0,\pi), \phi \in [0,2\pi)$. Define 
\begin{equation}
Y^l_m(\omega)=Y_l^m(\theta,\varphi)=(-1)^m \sqrt{\tfrac{2l+1}{4\pi}\tfrac{(l-m)!}{(l+m)!}}P^l_m\big(\cos(\theta)\big)e^{im\varphi}
\end{equation}
for $l\in\mathbb{N}, -l\le m\le l$ where $P^l_m$ are the Legendre functions. We have $Y^l_{-m}=(-1)^mY^l_m$ and the $(Y_m^l)$ constitute an orthonormal basis of $L^2(\mu)$ on $\Sph$, generally referred to as the spherical harmonic basis. Any $f\in L^2(\mu)$ has a spherical Fourier transform
$${\mathcal F}(f)_m^l=\int_{\Sph}f(\omega)\overline{Y_m^l(\omega)}\mu(d\omega)$$
and a reconstruction formula
$$f=\sum_{\ell \in \mathbb{N}}\sum_{-l\leq m\leq l}{\mathcal F}(f)_m^lY_m^l.$$
If $g\in L^2\big(\SO\big)$ the spherical convolution operator $Kf=g\star f$ defined in \eqref{spherical convolution} 
satisfies
\begin{equation} \label{blockSVDsphere}
{\mathcal F}(g\star f)_m^l=\sum_{n=-l}^l{\mathcal F}(g)_{mn}^l{\mathcal F}(f)_n^l
\end{equation}
and we retrieve the blockwise SVD formalism of Section \ref{blockwise SVD property} in dimension $d=2$ by setting $\bH=\G=L^2(\Sph,\mu)$,  where $\mu$ the probability Haar measure on $\Sph$ and
$$e_\lambda =g_\lambda= Y_m^\ell\;\;\text{with}\;\;\lambda = (m,\ell),\;\; \Lambda_\ell = \{(m,\ell),\;\;-\ell\le m\le \ell\}.$$
We have $|\Lambda_\ell| = 2\ell+1$ and by \eqref{blockSVDsphere}, $K_\ell$ is the finite dimension operator stable on $\text{Span}\{e_\lambda, \lambda \in \Lambda_\ell\}$ with matrix having entries
$$(K_\ell)_{mn}={\mathcal F}(g)_{mn}^\ell.$$
Hence Assumption \ref{blockwise SVD} is satisfied.  Notice also that in this case $K_\ell$ is generally not diagonal.
Assumption \ref{spectral behaviour of K} is satisfied as we assume that $g$ is {\it ordinary smooth} in the terminology of Kim and Koo \cite{KK}. Our Assumption \ref{spectral behaviour of K} exactly matches the constraint (3.6) in their paper
with examples given by the Laplace distribution on the sphere ($\nu=2$) or the Rosenthal distribution ($\nu>0$ arbitrary).
\end{proof}
\begin{proof}[Numerical implementation] Following Kerkyacharian, Pham Ngoc and Picard \cite{KPP} in their  Example 2, we take $f(\omega) = C\exp(-4\|\omega-\omega_1\|^2)$ with $\omega_1=(0,1,0)$ and $C=1/0.7854$. We have $\|f\|_{L^2(\mu)}=0.7469$.\\
$g$ is the density of a Laplace distribution on $\SO$, defined through  ${\mathcal F}(g)_{mn}^\ell=\delta_{mn} \big(1+\ell(\ell+1)\big)^{-1}$. Hence, the matrices $(K_\ell)_{mn}$ are homotheties whose ratios behave as $\ell^{-2}$. We have $\nu=2$.\\
 We plot in Figures \ref{Figure2} a $1000$-sample of $X_i$ with density $f$ on the sphere, and the action by $\varepsilon_i$ on the $X_i$, where the $\varepsilon_i$ are distributed according to $g$ in Figure \ref{Figure3}. Note that for the estimation of $g$, we have access to a noisy version of $g$ with noise level $\delta$ only. 
\begin{center}
\begin{figure}
\centering
\includegraphics[width=8cm, height=8cm]{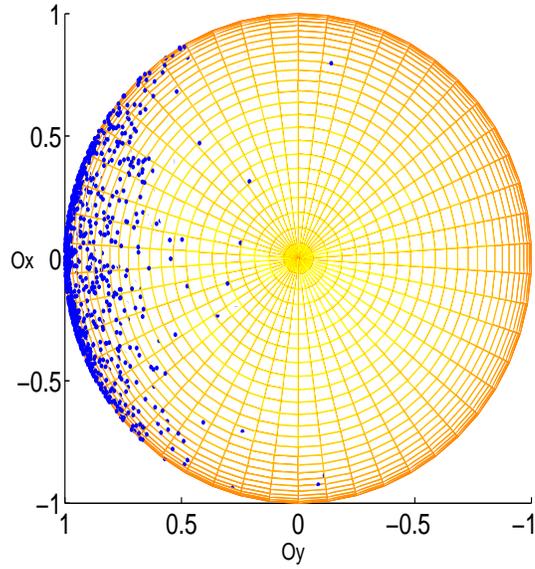}
\caption{\label{Figure2} \footnotesize{\textbf{Data from $f$.} Plot of $n=1000$ data with common distribution $f$ on the sphere $\mathbb{S}$ (planar representation).}}
 \end{figure}
\begin{figure}
\centering
\includegraphics[width=8cm, height=8cm]{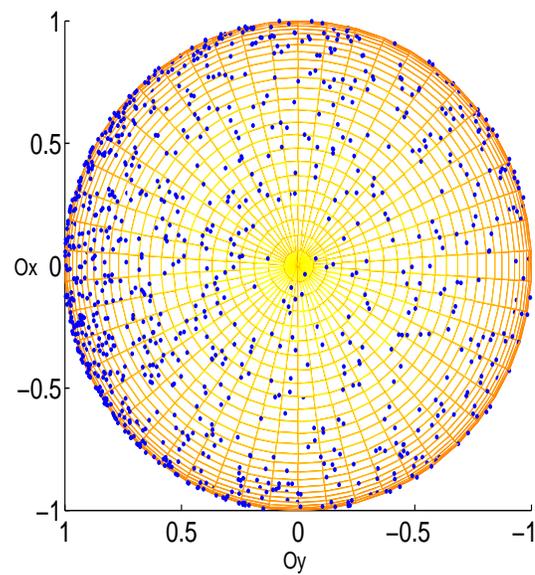}
\caption{\label{Figure3} \footnotesize{\textbf{Data from $f\star g$.} Plot of $n=1000$ data $\epsilon_i X_i$ on the sphere $\mathbb{S}$ with common distribution $Kf = f\star g$. The $X_i$ are the data pictured in Figure \ref{Figure2} and the $\varepsilon_i$ are sampled according to $g$ (planar representation).}}
 \end{figure}
 \end{center}
We display below the (renormalised) empirical squared error of $\widehat f_{10^8,\delta}$ (oracle choice $\lambda_0=1, \mu_0=1$) for $1000$ Monte-Carlo for several values of $\delta$. The noise level $\delta$ is to to be compared with the noise level $n^{-1/2} = 10^{-4}$. The latter is chosen non-negative, in order to show the interaction between the two types of error, and sufficiently small to emphasize the influence of $\de$ on the process of estimation.
\begin{center} 
\begin{tabular}{| l | c | c | c | c | c | }\hline
{\small Noise level} $\delta$ & $0$ & $10^{-3}$ & $3\,10^{-3}$ & $5\,10^{-3}$ & $10^{-2}$   \\
\hline
${\small \text{Mean error}}$ & 0.0466 & 0.0542 & 0.1732  & 0.2784 & 0.4335 \\
\hline
{\small Standard dev.}& 0.0011&0.0022&0.0126&0.0355&0.0466 \\
\hline 
\end{tabular}
\end{center}

Finally, on a specific sample of $n=10^8$ data, we plot the target density $f$ (Figure \ref{Figure4}) and its reconstruction for $n=10^8$ data with $\delta=0$ (Figure \ref{Figure5}) and $\delta = 3\,10^{-3}$ (Figure \ref{Figure6}). At a visual level, we oversimplify the representation by plotting $f$ and its reconstruction with a view from above the sphere through the $Oz$ axis. We see that the contour in Figure \ref{Figure6} is not well recovered in the regions where $f$ is small (on the right side of the graph in Figure \ref{Figure6}). The choice of $\lambda_0,\mu_0$ remains unchanged.
%\begin{center}
%\begin{figure}
%\includegraphics[width=8cm]{Figure4.eps}\vfill
%\includegraphics[width=8cm]{Figure5.eps}
%\caption{Caption }\label{label}
%\end{figure}
%\end{center}
\begin{figure}
\includegraphics[width=12.5cm, height=6cm]{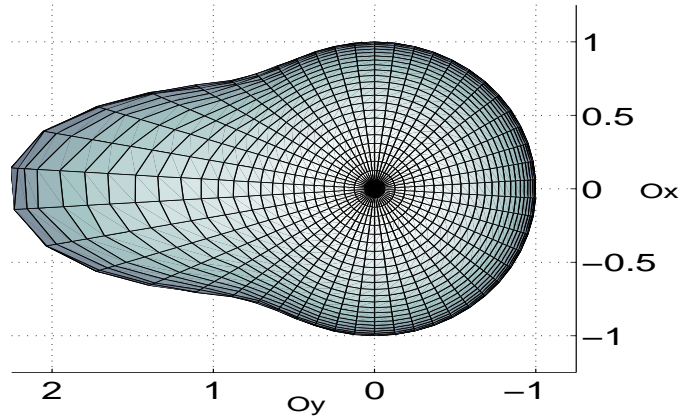}
\caption{\label{Figure4} \footnotesize{\textbf{Target density $f$.} The representation is simplified through a view from above the sphere through the $Oz$-axis.}}
 \end{figure}
\begin{figure}
\includegraphics[width=12.5cm, height=6cm]{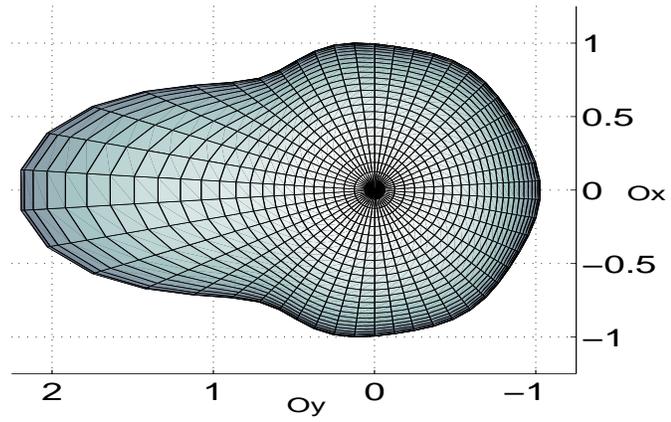}
\caption{\label{Figure5} \footnotesize{\textbf{Reconstruction for $n=10^8$ and $\delta=0$}.}}
 \end{figure}
\begin{figure}
\includegraphics[width=12.5cm, height=6cm]{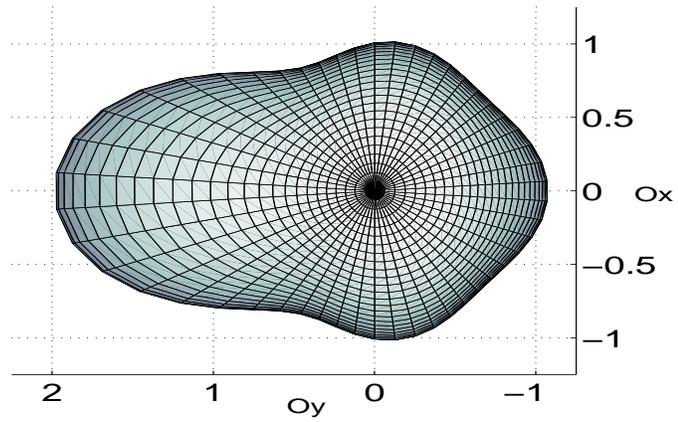}
\caption{\label{Figure6} \footnotesize{\textbf{Reconstruction for $n=10^8$ and $\delta=3\,10^{-3}$}. The reconstruction is polluted simultaneously by the limited number of observations $n$ and the noise level $\delta$ in the blurring $g$.}}
 \end{figure}
\end{proof}
\subsection{Circular deconvolution} \label{exemple deconvolution circulaire}

\begin{proof}[Scientific context] In many engineering problems, the observation of a signal $f$ or image is distorted by the action of a linear operator $K$. We assume for simplicity that $f$ lives on the torus $\mathbb{T}=[0,1]$ (or $[0,1]^d$) appended with periodic boundary conditions. In many instances, the restoration of $f$ from the noisy observation of $Kf$ is challenged by the additional uncertainty about the operator $K$. This is the case for instance in electronic microscopy \cite{microscopy} for the restoration of fluorescence Confocal Laser Scanning Microscope (CLSM) images. In other words, the quality of the image suffers from two physical limitations: error measurements or limited accuracy, and the fact that the exact PSF (the incoherent point spread function) that accounts for the blurring of $f$ (mathematically the action of $K$) is not precisely known. This is a classical issue that goes back to \cite{PSF1, PSF2}. An idealised additive Gaussian model for the noise contamination yields the observation \eqref{Gaussian deconvolution}
with 
$$Kf(x)=g \star f(x) := \int_{{\mathbb T}^d} g(u)f(x-u)du,\;\;x\in {\mathcal D} = {\mathbb T}^d.$$
%and $\mu$ is the uniform probability measure on ${\mathbb T}^d$. 
The degradation process $K = g \star \cdot$ is characterised by the impulse response function $g$. In most cases of interest, we do not know the exact form of $g$. In a condensed idealised statistical setup, we have access to
\begin{equation} \label{Kdelta function}
g_{\delta} = g+\delta \dot W',
\end{equation}
where $\dot W'$ is another Gaussian white noise defined on $ L^2(\mu)$ and independent of $\dot W$. Experimental approaches that justify representation \eqref{Kdelta function} are described in \cite{PSFEst1, Keller, Hell}.
\end{proof}
\begin{proof}[Checking Assumptions \ref{blockwise SVD} and \ref{blockwise SVD}] We obviously have $\bH=\G=L^2(\mathbb{T}^d)$ and the bases $(e_\lambda)$ and $(g_\lambda)$ will coincide with the $d$-dimensional extension of the circular trigonometric basis $(e^{2i\pi k x}, k \in \mathbb{Z})$ if we set:
$$e_\lambda(x_1,\ldots,x_d)=\prod_{j=1}^d e^{2i\pi k_j x_j},\;\;(x_1,\ldots, x_d) \in \mathbb{T}^d,$$
where we put
$$\lambda = (k_1,\ldots, k_d),\;\;\ell = |\lambda| = 1+ \sum_{j=1}^d|k_j|,\;\;\text{and}\;\;\ell \geq 1.$$
Any $f\in L^2(\mu)$ has a Fourier transform ${\mathcal F}(f)_\lambda = \int_{\mathbb{T}^d}f(x)\overline{e_\lambda(x)}\mu(dx)$ and moreover, if $g \in L^2(\mu)$, we have
$${\mathcal F}(f\star g)_\lambda = {\mathcal F}_\lambda(f){\mathcal F}_\lambda(g).$$
Therefore, $K$ is diagonal in the basis $(e_{\lambda}, \lambda \in \Lambda)$ henceforth stable. Moreover, with $\Lambda_\ell = \{\lambda, |\lambda|=\ell\}$, we have $\abs{\Lambda_\ell}=\begin{pmatrix} \ell-1+d \\ d-1 \end{pmatrix} \sim \ell^{d-1}$.  Moreover $K_\ell=\mathrm{Diag}\big({\mathcal F}_\lambda(g),\lambda \in \Lambda_\ell\big)$ and Assumption \ref{blockwise SVD} follows. Assuming that $g$ satisfies $c |\lambda|^{-\nu} \leq \big|{\mathcal F}(g)_\lambda \big| \leq c' |\lambda|^{-\nu}$ for some $\nu \geq 0$ and constants $c,c'>0$, we readily obtain Assumption \ref{spectral behaviour of K}. Note also that since $K$ is diagonal in the basis $(e_\lambda, \lambda \in \Lambda)$ observing $g_\delta$ in the representation \eqref{Kdelta function} is equivalent to observing $K_\delta$ in \eqref{Kdelta}.
\end{proof}
\begin{proof}[Numerical implementation]
We numerically implement $\widehat f_{n,\delta}$ in dimension $d=1$ in the case where there is no noise in the signal (formally $n^{-1/2}=0$) in order to illustrate the parametric effect that dominates in the optimal rate of convergence in Theorems \ref{upper bounds} and \ref{borne inf} that becomes $\big(\delta^2\big)^{s/\nu \wedge 1}$ in that case. We take as target function $f : \mathbb{T} \rightarrow \R$ belonging to ${\mathcal W}^{5-\alpha}$ for all $\alpha >1/2$ and defined by its Fourier coefficients $${\mathcal F}(f)_\lambda = |\lambda|^{-5},\;\;\lambda \in \{-1000,\ldots, 1000\}.$$
We pick a family of blurring functions $g_{\nu}$ defined in the same manner by the formula
$${\mathcal F}(g_\nu)_\lambda = |\lambda|^{-\nu},\;\;\lambda \in \{-1000,\ldots, 1000\},\;\;\nu \in \{1, 4, 5, 6, 8\}.$$
\begin{figure}
\includegraphics[width=10cm,height=5cm]{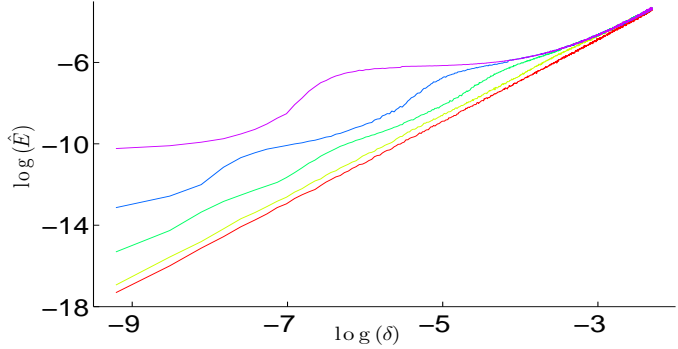}
\caption{\label{Figure1} \footnotesize{\textbf{Estimation of the rate exponent when $n^{-1/2} \ll \delta$.} Empirical squared-error $\hat{E}$ versus $\delta$ in log-log scale.  Top-to-bottom: $\nu=8,6,5,4,1$. The target function has smoothness $s=5-\alpha$ for all $\alpha >1/2$. For $\nu < 4.5$, the slope of the curve is constant and close to $2$, confirming the parametric rate predicted by the theory when the smoothess of the signal dominates the degree of ill-posedness of the operator. The empirical errors were computed using $1000$ Monte-Carlo simulations.}}
 \end{figure}
We show in Figure \ref{Figure1} in a log-log plot the mean-squared error of $\widehat f_{\infty,\delta}$ for the oracle choice $\mu_0=0, \lambda_0=1$ over 1000 Monte-Carlo simulations for $\nu \in \{1,4,5, 6,8\}$ and $\delta \in [10^{-4}, 10^{-1}]$. For small values of $\delta$ the predicted slope of the curve gives a rough estimate of the rate of convergence. We visually see that for the critical case $\nu \leq s=5-\alpha$ with $\alpha >1/2$ and below, the slope is close to $2$ confirming the parametric rate that is obtained whenever $\nu \leq s$. 
\end{proof}

\section{Proofs} \label{proofs}
\subsection{Preliminary estimates}
\begin{proof}[Preparation]
%In the sequel, we abbreviate $L^2(\mu)$ by $L^2$. 
Recall 
%from Section \ref{Estimation by blockwise SVD} 
that $H_\ell = \mathrm{Span} \{e_\lambda, \lambda \in \Lambda_\ell\}$, $G_\ell = \mathrm{Span} \{g_\lambda, \lambda \in \Lambda_\ell\}$ and that $P_\ell$ (resp. $Q_\ell$) denotes the orthogonal projector onto $G_\ell$ (resp. $H_\ell$).
For $h \in \bH $, we have 
$$P_\ell Kh = P_\ell KQ_\ell h + P_\ell K(\text{Id}-Q_\ell)h.$$
Using Assumption \ref{blockwise SVD}, we have
% defined by
%$L^2 = H_\ell \oplus H_\ell^\perp$ 
$K(\text{Id}-Q_\ell)h \in G_\ell^\perp$ and therefore $P_\ell K(\text{Id}-Q_\ell)h=0$. As a consequence
\begin{equation} \label{stabilite K}
P_\ell K  = K_\ell Q_\ell.
\end{equation}
In turn, we have a convenient description of the observation $K_\delta$ defined in \eqref{Kdelta} and $y_n$ defined in \eqref{Gaussian deconvolution} and in terms of a sequence space model that we shall now describe.
\end{proof}
\begin{proof}[Notation]
If $h \in \G$, we denote by 
${\boldsymbol h}_\ell$ the (column) vector of coordinates of $P_\ell h$ in the basis $(g_\lambda, \lambda \in \Lambda_\ell)$.
If $T:\bH\rightarrow \G$ is a linear operator, we write ${\boldsymbol T}_\ell$ for the matrix of the Galerkin projection $T_\ell=P_\ell T_{|H_\ell}$ of $T$. 
%onto  basis $H_\ell()$. 
\end{proof}
\begin{proof}[Sequence model for error in the operator] The observation of $K_{\delta}$ in \eqref{Kdelta} leads to the representation
$K_{\delta,\ell} = K_\ell+\delta \dot B_\ell$, or equivalently, in matrix notation
\begin{equation} \label{matrix Kdelta}
\boldsymbol{K}_{\delta, \ell} = \boldsymbol{K}_\ell + \delta\boldsymbol{\dot B}_\ell,\;\;\ell \geq 1,
\end{equation}
where $\boldsymbol{\dot B}_\ell$ is a $|\Lambda_\ell|\times |\Lambda_\ell|$ matrix with entries that are independent centred Gaussian random variables, with unit variance. The following estimate is a classical concentration property of random matrices. For $\ell \leq L$,
  $\|\cdot\|_{\text{op}}$ denotes the operator norm for $|\Lambda_\ell| \times |\Lambda_\ell|$ matrices (we shall skip the dependence upon $\ell$ in the notation). 
\begin{lemma}[\cite{DS}, Theorem II.4] \label{concentration operator}There are positive constants $\beta_0,c_0$ such that
\begin{align}
\text{For all}\;\;\beta \geq \beta_0,&\;\PP\big(|\Lambda_\ell|^{-1/2}\|\boldsymbol{\dot B}_\ell\|_{\text{op}}\geq \beta\big) \leq \exp\big(-c_0\beta^2|\Lambda_\ell|^2\big). \label{first ineq}%\\
%\text{for all}\;\;\beta \geq 0,&\; \PP\big(|\Lambda_\ell|^{-1/2}\|\boldsymbol{\dot B}_\ell\|_{\text{op}}\geq \beta\big) \leq \big(C_0\beta\big)^{|\Lambda_\ell|^2}.
\end{align}
%where $\|\cdot\|_{\mathrm{op}}$ denotes the operator norm on matrices.
\end{lemma}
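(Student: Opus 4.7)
The plan is to invoke Gaussian concentration for the operator norm of the matrix $\boldsymbol{\dot B}_\ell$. Writing $N=|\Lambda_\ell|$ and identifying $\boldsymbol{\dot B}_\ell$ with a standard Gaussian vector on $\R^{N^2}$ via its entries (in the product basis), I would first observe that $A\mapsto \|A\|_{\text{op}}$ is $1$-Lipschitz with respect to the Frobenius (Hilbert--Schmidt) norm on matrices: since $\|A\|_{\text{op}}=\sup_{\|u\|=\|v\|=1}\langle Au,v\rangle$ and each $A\mapsto\langle Au,v\rangle$ has Lipschitz constant $\|u\|\,\|v\|\le 1$, the supremum of $1$-Lipschitz functions remains $1$-Lipschitz.

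The next step is Borell's (Tsirelson--Ibragimov--Sudakov) Gaussian concentration inequality, which gives
$$\PP\bigl(\|\boldsymbol{\dot B}_\ell\|_{\text{op}}\ge \E\|\boldsymbol{\dot B}_\ell\|_{\text{op}}+u\bigr)\le \exp(-u^2/2),\qquad u\ge 0.$$
I would then apply the classical estimate $\E\|\boldsymbol{\dot B}_\ell\|_{\text{op}}\le 2\sqrt{N}$ — proved either by Gordon's min--max inequality or by an $\varepsilon$-net argument on the unit sphere $S^{N-1}$, and encapsulated by Davidson--Szarek's Theorem~II.4. Setting $u=(\beta-2)\sqrt{N}$ for $\beta\ge \beta_0 > 2$ gives
$$\PP\bigl(\|\boldsymbol{\dot B}_\ell\|_{\text{op}}\ge \beta\sqrt{N}\bigr)\le \exp\bigl(-(\beta-2)^2N/2\bigr),$$
and a choice of $\beta_0$ large enough that $(\beta-2)^2\ge c\beta^2$ for an absolute $c>0$ yields an exponent proportional to $\beta^2 N$.

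The main obstacle, as highlighted by the previous attempt, is that this natural route — which is essentially the content of the cited Davidson--Szarek theorem — delivers an exponent of order $\beta^2|\Lambda_\ell|$, and \emph{not} $\beta^2|\Lambda_\ell|^2$. Because $\|\cdot\|_{\text{op}}$ is tightly controlled by $\E\|\cdot\|_{\text{op}}\sim 2\sqrt{N}$ and Gaussian concentration is sharp, no tightening of the argument along these lines can upgrade $N$ to $N^2$ in the exponent. I would therefore verify whether the $|\Lambda_\ell|^2$ in the statement is a typographical slip for $|\Lambda_\ell|$: in the downstream application the bound is used via a union bound over $\ell\le L$ together with the cutoff $\kappa_\ell$ in \eqref{def kappa ell} involving $\delta^2$ and the factor $|\Lambda_\ell|^{-1/2}$, and there the exponent $c_0\beta^2|\Lambda_\ell|$ already provides all the room needed. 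If the $|\Lambda_\ell|^2$ is intentional, the only route I can envision is to bound $\|\boldsymbol{\dot B}_\ell\|_{\text{op}}\le \|\boldsymbol{\dot B}_\ell\|_F$ and use the $\chi^2_{N^2}$-concentration of the squared Frobenius norm, which produces $N^2$-scale factors in the exponent but at the price of a poorer recentering (mean of order $N$ rather than $\sqrt{N}$), forcing a more conservative $\beta_0$.
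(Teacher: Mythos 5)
Your reading is correct, and your analysis actually uncovers an error in the paper's statement. The paper gives no proof for this lemma beyond the citation to Davidson--Szarek's Theorem~II.4, and that theorem, unpacked exactly as you do (operator norm is $1$-Lipschitz in Frobenius norm, Borell's concentration around $\E\|\boldsymbol{\dot B}_\ell\|_{\text{op}}\le 2\sqrt{N}$ with $N=|\Lambda_\ell|$), yields $\PP\big(\|\boldsymbol{\dot B}_\ell\|_{\text{op}}\ge \beta\sqrt{N}\big)\le \exp\big(-(\beta-2)^2N/2\big)$, i.e.\ an exponent of order $\beta^2 N$. The exponent $c_0\beta^2|\Lambda_\ell|^2$ in the displayed inequality is not achievable: already the one-entry lower bound $\|\boldsymbol{\dot B}_\ell\|_{\text{op}}\ge |(\boldsymbol{\dot B}_\ell)_{11}|$ gives $\PP\big(\|\boldsymbol{\dot B}_\ell\|_{\text{op}}\ge\beta\sqrt{N}\big)\gtrsim \exp(-\beta^2N/2)$, which for $|\Lambda_\ell|\sim\ell^{d-1}\to\infty$ (any $d>1$) is incompatible with a bound $\exp(-c_0\beta^2N^2)$ for fixed $\beta_0,c_0$. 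So $|\Lambda_\ell|^2$ should read $|\Lambda_\ell|$.

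You are also right that this is harmless downstream: every use of \eqref{first ineq} in the paper (the bounds \eqref{ineg dev op}, the estimates for $VIII$, $VII$, and $II$) only exploits $|\Lambda_\ell|^2\ge 1$ via a $\max_\ell$, and $|\Lambda_\ell|\ge 1$ gives the identical conclusion once $\lambda_0$ (resp.\ $\mu_0$) is tuned. The moment bound \eqref{moment bound norm operator} is likewise a consequence of the $|\Lambda_\ell|$-version. Your alternative route via the Frobenius norm and $\chi^2_{N^2}$ concentration would indeed produce $N^2$ in the exponent, but, as you note, only after recentering at $\E\|\boldsymbol{\dot B}_\ell\|_F\sim N$, which translates into a threshold $\beta_0$ that must grow like $\sqrt{N}$ and therefore cannot furnish the uniform-in-$\ell$ statement the paper needs; it is not a rescue of the printed exponent. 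In short: same approach as the paper, correct identification of a typographical/mathematical slip, and a correct check that the main theorems are unaffected.
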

An immediate consequence of Lemma \ref{concentration operator} is the following moment bound:
\begin{equation} \label{moment bound norm operator}
\text{For every}\;\;p>0,\;\;\E\big[\|\boldsymbol{\dot B}_\ell\|_{\text{op}}^p\big] \lesssim |\Lambda_\ell|^{p/2}.
\end{equation}
\end{proof}
\begin{proof}[Sequence model for error in the signal] From \eqref{Gaussian deconvolution}, we observe the Gaussian measure $y_n$, or equivalently, thanks to \eqref{stabilite K}
\begin{align*}
P_\ell y_n = P_\ell Kf+n^{-1/2} P_\ell \dot W = K_\ell Q_\ell f + n^{-1/2}\boldsymbol{\eta}_\ell,\;\;\ell \geq 1
\end{align*}
or, using the notation introduced in \eqref{empirical coeff}, in matrix notation 
\begin{equation} \label{matrix gaussian deconvolution}
{\boldsymbol z}_{n,\ell}  = \boldsymbol{K}_\ell \boldsymbol{f}_\ell +n^{-1/2}\boldsymbol{\eta}_\ell,\;\;\ell \geq 1
\end{equation}
where we used \eqref{stabilite K}, with $\boldsymbol{\eta}_\ell$ denoting a vector of $|\Lambda_\ell|$ independent centred Gaussian random variables with unit variance. 

The following result is a direct consequence of the fact that $\boldsymbol{\eta}_\ell$ has a $\chi$-square distribution with $|\Lambda_\ell|$ degrees of freedom. The proof is standard
% {\tt [ref\;required\;here?]}.
\begin{lemma} \label{concentration vector gaussian}
There are positive constant $\beta_1,c_1$ such that
\begin{equation} \label{first ineq vector}
\text{For all}\;\;\beta \geq \beta_1,\;\PP\big(|\Lambda_\ell|^{-1/2}\|\boldsymbol{\eta}_\ell\|\geq \beta\big) \leq \exp\big(-c_1\beta^2|\Lambda_\ell| \big), 
\end{equation}
\end{lemma}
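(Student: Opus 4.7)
The claim is purely a Gaussian concentration statement for the squared Euclidean norm of an iid standard Gaussian vector. Since $\boldsymbol{\eta}_\ell \in \R^{|\Lambda_\ell|}$ has independent $\mathcal{N}(0,1)$ entries, the random variable $\|\boldsymbol{\eta}_\ell\|^2$ has a chi-square distribution with $|\Lambda_\ell|$ degrees of freedom, so the whole argument reduces to a Cram\'er--Chernoff tail bound for $\chi^2$.

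The plan is to use the exponential moment generating function. For any $\lambda \in (0,1/2)$, independence of the coordinates of $\boldsymbol{\eta}_\ell$ together with the explicit MGF of a standard Gaussian squared gives
\begin{equation*}
\E\bigl[\exp\bigl(\lambda \|\boldsymbol{\eta}_\ell\|^2\bigr)\bigr] \;=\; (1-2\lambda)^{-|\Lambda_\ell|/2}.
\end{equation*}
Markov's inequality applied at level $t = \beta^2 |\Lambda_\ell|$ then yields, for every admissible $\lambda$,
\begin{equation*}
\PP\bigl(\|\boldsymbol{\eta}_\ell\| \geq \beta |\Lambda_\ell|^{1/2}\bigr) \;\leq\; \exp\Bigl(|\Lambda_\ell|\bigl(-\lambda \beta^2 - \tfrac{1}{2}\log(1-2\lambda)\bigr)\Bigr).
\end{equation*}

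Now I would optimize in $\lambda$: the choice $\lambda = \tfrac{1}{2}(1 - \beta^{-2})$ (valid as soon as $\beta > 1$) minimises the exponent, collapsing it to
\begin{equation*}
-\tfrac{1}{2}(\beta^2 - 1) + \log \beta \;=\; -\beta^2 \psi(\beta), \qquad \psi(\beta) := \tfrac{1}{2}\bigl(1 - \beta^{-2}\bigr) - \beta^{-2}\log \beta.
\end{equation*}
The function $\psi$ tends to $1/2$ as $\beta \to \infty$ and is continuous, so one can fix $\beta_1 > 1$ large enough that $\psi(\beta) \geq c_1$ for all $\beta \geq \beta_1$, with any chosen $c_1 \in (0, 1/2)$ (say $c_1 = 1/4$). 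This produces exactly the stated inequality
\begin{equation*}
\PP\bigl(|\Lambda_\ell|^{-1/2}\|\boldsymbol{\eta}_\ell\| \geq \beta\bigr) \;\leq\; \exp\bigl(-c_1 \beta^2 |\Lambda_\ell|\bigr)
\end{equation*}
for every $\beta \geq \beta_1$ and every $\ell \geq 1$, independently of the block size $|\Lambda_\ell|$.

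No step is really an obstacle here; the only care is the optimisation in $\lambda$ and ensuring the absolute constants $\beta_1, c_1$ are chosen uniformly in $\ell$. I note that this estimate is precisely of the Laurent--Massart type for $\chi^2$ deviations, and one could alternatively cite their inequality $\PP(\|\boldsymbol{\eta}_\ell\|^2 \geq |\Lambda_\ell| + 2\sqrt{|\Lambda_\ell|x} + 2x) \leq e^{-x}$ with $x$ proportional to $\beta^2 |\Lambda_\ell|$ to obtain the same bound in one line.
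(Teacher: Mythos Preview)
Your proof is correct and is precisely the standard Cram\'er--Chernoff argument the paper alludes to when it writes ``a direct consequence of the fact that $\boldsymbol{\eta}_\ell$ has a $\chi$-square distribution with $|\Lambda_\ell|$ degrees of freedom; the proof is standard.'' The paper gives no further details, so your write-up is actually more complete than what appears there.
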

%An immediate consequence of Lemma \ref{concentration vector gaussian} is the moment bound
%\begin{equation} \label{moment bound norm vector}
%\E\big[\|\boldsymbol{\zeta}_\ell\|^2\big] \lesssim |\Lambda_\ell|.
%\end{equation}
\end{proof}
\subsection{Proof of Theorem \ref{upper bounds}}
%Hereafter, the notation $a_n \lesssim b_n$ means $a \leq cb$ for some constant $c>0$ that does not depend on $f$, $K$ nor $n$ and that may vary from line to line. 
%We abbreviate $L^2(\mu)$ by $L^2$. 
We have
%\begin{align*}
$$\|\widehat f_{n}-f\|_{\bH}^2  =  \sum_{\ell \geq 1} \|\widehat {\boldsymbol f}_{n,\ell}-\boldsymbol{f}_\ell\|^2 = \sum_{\ell = 1}^L\|\widehat {\boldsymbol f}_{n,\ell}-\boldsymbol{f}_\ell\|^2+\sum_{\ell > L}\|\boldsymbol{f}_\ell\|^2 
$$
where $\|\cdot\|$ denotes the Euclidean norm on $\R^{|\Lambda_\ell|}$ (we shall omit any reference to $\ell$ when no confusion is possible). Concerning the bias term, we have 
\begin{equation} \label{controle biais}
\sum_{\ell > L}\|\boldsymbol{f}_\ell\|^2\leq\|f\|_{{\mathcal W}^s}^2L^{-2s}
\end{equation}
and this term has the right order by definition of $L$ in \eqref{choice of L}.
Concerning the stochastic term, thanks to our preliminary analysis, we may write
$$\widehat {\boldsymbol f}_{n,\ell} = (\boldsymbol{K}_{\delta,\ell} )^{-1}\boldsymbol{z}_{n,\ell} {\bf 1}_{\{\|(\boldsymbol{K}_{\delta,\ell})^{-1}\|_{\text{op}} \leq \kappa_\ell\}} {\bf 1}_{\{\|\boldsymbol{z}_{n,\ell}\|\geq \tau_\ell\}},$$
We set
%\begin{align*}
$${\mathcal A}_\ell  = \{\|(\boldsymbol{K}_{\delta,\ell})^{-1}\|_{\text{op}}\leq \kappa_\ell\}\;\;\text{and}\;\;{\mathcal B}_\ell  = \{\|\boldsymbol{z}_{n,\ell}\|\geq \tau_\ell\}.$$
%\end{align*} 
We thus obtain the decomposion of the variance term as 
$$\sum_{\ell=1}^L\|\widehat {\boldsymbol f}_{n,\ell} -\boldsymbol{f}_\ell \|^2 \leq I+II+III,$$
with
\begin{align*}
I & =\sum_{\ell=1}^L\|(\boldsymbol{K}_{\delta,\ell})^{-1}\boldsymbol{z}_{n,\ell} -\boldsymbol{f}_\ell\|^2  
{\bf 1}_{{\mathcal A}_\ell}{\bf 1}_{{\mathcal B}_\ell}\\
II & =\sum_{\ell=1}^L\|\boldsymbol{f}_\ell\|^2 {\bf 1}_{{\mathcal A}_\ell^c},\\
III & =\sum_{\ell=1}^L\|\boldsymbol{f}_\ell\|^2 {\bf 1}_{{\mathcal B}_\ell^c}. 
\end{align*}
We shall successively bound each term $I$, $II$ and $III$.
\begin{proof}[$\bullet$ The term I, preliminary decomposition] Writing
$$\boldsymbol{z}_{n,\ell} = \big(\boldsymbol{K}_{\delta,\ell}-\delta \boldsymbol{\dot B}_\ell\big)\boldsymbol{f}_\ell + n^{-1/2}\boldsymbol{\eta}_\ell,$$
%where $\boldsymbol{\eta}^\ell$ denotes $\boldsymbol{\zeta}_\ell$ or $\boldsymbol{\xi}_{n,\ell}$ indifferently, 
we obtain
$$ (\boldsymbol{K}_{\delta,\ell})^{-1}\boldsymbol{z}_{n,\ell} -\boldsymbol{f}_\ell 
  =  -\delta(\boldsymbol{K}_{\delta,\ell})^{-1}\boldsymbol{\dot B}_\ell\boldsymbol{f}_\ell+n^{-1/2}(\boldsymbol{K}_{\delta,\ell})^{-1}\boldsymbol{\eta}_\ell.$$
We introduce further the event
$\{\|\delta \boldsymbol{\dot B}_\ell\|_{\text{op}} \leq a_\ell\}$ with 
$a_\ell = \tfrac{\rho}{\kappa_\ell}$ for some $0 < \rho < \tfrac{1}{2}$ 
and the condition $\{\|\boldsymbol{K}_\ell \boldsymbol{f}_\ell\| \geq  \tfrac{\tau_\ell}{2}\}$. We thus have  
$$I \lesssim IV + V + VI+VII,$$
with
\begin{align*}
IV & = \sum_{\ell=1}^L\|\delta(\boldsymbol{K}_{\delta,\ell})^{-1}\boldsymbol{\dot B}_\ell\boldsymbol{f}_\ell\|^2 {\bf 1}_{{\mathcal A}_\ell\, \cap \,{\mathcal B}_\ell}\, {\bf 1}_{\big\{\|\delta \boldsymbol{\dot B}_\ell\|_{\text{op}} \leq a_\ell \big\}}{\bf 1}_{\big\{\|\boldsymbol{K}_\ell \boldsymbol{f}_\ell\| \geq \tfrac{\tau_\ell}{2}\big\}},\\
V & = \sum_{\ell=1}^L\|n^{-1/2}(\boldsymbol{K}_{\delta,\ell})^{-1}\boldsymbol{\eta}_\ell\|^2 {\bf 1}_{{\mathcal A}_\ell\, \cap \,{\mathcal B}_\ell} \,{\bf 1}_{\big\{\|\delta \boldsymbol{\dot B}_\ell\|_{\text{op}} \leq a_\ell \big\}}{\bf 1}_{\big\{\|\boldsymbol{K}_\ell \boldsymbol{f}_\ell\| \geq \tfrac{\tau_\ell}{2}\big\}},\\
VI & = \sum_{\ell=1}^L\|\delta(\boldsymbol{K}_{\delta,\ell})^{-1}\boldsymbol{\dot B}_\ell\boldsymbol{f}_\ell\|^2 {\bf 1}_{{\mathcal A}_\ell\, \cap \,{\mathcal B}_\ell} \Big({\bf 1}_{\big\{\|\delta \boldsymbol{\dot B}_\ell\|_{\text{op}} > a_\ell\big\}}+{\bf 1}_{\big\{\|\boldsymbol{K}_\ell \boldsymbol{f}_\ell\| < \tfrac{\tau_\ell}{2}\big\}}\Big),\\
VII & = \sum_{\ell=1}^L\|n^{-1/2}(\boldsymbol{K}_{\delta,\ell})^{-1}\boldsymbol{\eta}_\ell\|^2 {\bf 1}_{{\mathcal A}_\ell\, \cap \,{\mathcal B}_\ell}\Big({\bf 1}_{\big\{\|\delta \boldsymbol{\dot B}_\ell\|_{\text{op}} > a_\ell\big\}}+{\bf 1}_{\big\{\|\boldsymbol{K}_\ell \boldsymbol{f}_\ell\| < \tfrac{\tau_\ell}{2}\big\}}\Big).
\end{align*}
We shall next successively bound each term $IV$, $V$, $VI$ and $VII$
\end{proof}
\begin{proof}[$\bullet$ The term IV] First, we have
\begin{align*}
(\boldsymbol{K}_{\ell})^{-1} & = (\boldsymbol{K}_{\delta,\ell}-\delta \boldsymbol{\dot B}_\ell)^{-1} \\
& =  (\boldsymbol{I}-\delta \boldsymbol{K}_{\delta,\ell}^{-1}\boldsymbol{\dot B})^{-1}(\boldsymbol{K}_{\delta,\ell})^{-1}.
\end{align*}
On ${\mathcal A}_\ell  = \{\|(\boldsymbol{K}_{\delta,\ell})^{-1}\|_{\text{op}}\leq \kappa_\ell\}$ and $\{\|\delta \boldsymbol{\dot B}_{\ell}\|_{\text{op}} \leq a_\ell\}$, since $a_\ell$ satisfies $\kappa_\ell\, a_\ell = \rho < \tfrac{1}{2}$, by a usual Neumann series argument,
\begin{align*}
\| \big(\boldsymbol{I}-\delta (\boldsymbol{K}_{\delta,\ell})^{-1}\boldsymbol{\dot B}\big)^{-1}\|_{\text{op}} & = \|\sum_{i\geq 0}(-\boldsymbol{K}_{\delta,\ell})^{i}(\delta\boldsymbol{\dot B})^i\|_{\text{op}} \\
& \leq  \sum_{i \geq 0} \|\boldsymbol{K}_{\delta,\ell}\|_{\text{op}}^i\|\delta\boldsymbol{\dot B}\|_{\text{op}}^i\\
& \leq \sum_{i \geq 0} \rho^i=(1-\rho)^{-1}.
\end{align*}
Therefore, on ${\mathcal A}_\ell$ and $\{\|\delta \boldsymbol{\dot B}_{\ell}\|_{\text{op}} \leq a_\ell\}$, we have 
\begin{equation} \label{eq norme bruit}
\|(\boldsymbol{K}_\ell)^{-1}\|_{\text{op}} \leq (1-\rho)^{-1} \|(\boldsymbol{K}_{\delta,\ell})^{-1}\|_{\text{op}} \leq (1-\rho)^{-1}\kappa_\ell.
\end{equation}
Second, we now write
$$(\boldsymbol{K}_{\delta,\ell})^{-1}=\big(\boldsymbol{I}-(\boldsymbol{K}_\ell)^{-1}\delta \boldsymbol{\dot B}_\ell\big)^{-1}(\boldsymbol{K}_\ell)^{-1},$$
hence, on ${\mathcal A}_\ell$ and $\{\|\delta \boldsymbol{\dot B}_{\ell}\|_{\text{op}} \leq a_\ell\}$, we have by \eqref{eq norme bruit}
$$\|(\boldsymbol{K}_{\ell})^{-1}\delta \boldsymbol{\dot B}_{\ell}\|_{\text{op}} \leq (1-\rho)^{-1}\kappa_\ell a_\ell \leq \frac{\rho}{1-\rho}<1$$
since $\rho <\tfrac{1}{2}$ by assumption. The same Neumann series argument now entails
\begin{equation} \label{eq norme bruit inverse}
\|(\boldsymbol{K}_{\delta,\ell})^{-1}\|_{\text{op}} \leq \frac{\rho}{1-\rho}\|(\boldsymbol{K}_\ell)^{-1}\|_{\text{op}}.
\end{equation}
We are ready to bound the term IV itself. We have 
\begin{align*}
&\, \|\delta(\boldsymbol{K}_{\delta,\ell})^{-1}\boldsymbol{\dot B}_\ell\boldsymbol{f}_\ell\|^2 {\bf 1}_{{\mathcal A}_\ell}{\bf 1}_{\big\{\|\delta \boldsymbol{\dot B}_{\ell}\|_{\text{op}} \leq a_\ell\big\}} \\
 \leq &\, \|(\boldsymbol{K}_{\delta,\ell})^{-1}\|_{\text{op}}^2 \|\delta \boldsymbol{\dot B}_{\ell}\|_{\text{op}}^2 \|\boldsymbol{f}_\ell\|^2 \,{\bf 1}_{{\mathcal A}_\ell}{\bf 1}_{\big\{\|\delta \boldsymbol{\dot B}_{\ell}\|_{\text{op}} \leq a_\ell\big\}} \\
 \lesssim &\, \|(\boldsymbol{K}_\ell)^{-1}\|_{\text{op}}^2\kappa_\ell^{-2} \|\boldsymbol{f}_\ell\|^2\,{\bf 1}_{\big\{\|(\boldsymbol{K}_\ell)^{-1}\|_{\text{op}} \leq (1-\rho)^{-1}\kappa_\ell\big\}},
\end{align*}
where we successively used \eqref{eq norme bruit} and \eqref{eq norme bruit inverse}. It follows that
\begin{align*}
\E\big[IV\big] &\lesssim \sum_{\ell=1}^L\|(\boldsymbol{K}_\ell)^{-1}\|_{\text{op}}^2\kappa_\ell^{-2} \|\boldsymbol{f}_\ell\|^2\,{\bf 1}_{\big\{\|(\boldsymbol{K}_\ell)^{-1}\|_{\text{op}} \leq (1-\rho)^{-1}\kappa_\ell\big\}} \\
& \lesssim \sum_{\ell=1}^{L}\ell^{2\nu}\kappa_\ell^{-2}\|\boldsymbol{f}_\ell \|^2
%+\sum_{\ell=k+1}^L \|\boldsymbol{f}_\ell\|^2
\end{align*}
%for any $1 \leq k \leq L$ and 
where we used Assumption \ref{spectral behaviour of K}. The bound is uniform in $K \in {\mathcal G}^{\nu}(Q)$. By definition of $\kappa_\ell$ and using that $|\Lambda_\ell|$ is of order $\ell^{d-1}$, we derive
$$
%\E[IV]  \lesssim \delta^2|\log\delta|\sum_{\ell=1}^k \ell^{2\nu+d-1}\|\boldsymbol{f}_\ell\|^2+\sum_{\ell >k}\|\boldsymbol{f}_\ell\|^2.
\E[IV] \lesssim \big((\delta^2|\log\delta|)\bigvee n^{-1}\big) \sum_{\ell=1}^L \ell^{2\nu+d-1}\|\boldsymbol{f}_\ell\|^2.
$$
If $2\nu+d-1 \leq 2s$, we have 
$$\sum_{\ell=1}^L \ell^{2\nu+d-1}\|\boldsymbol{f}_\ell\|^2 \leq \|f\|_{{\mathcal W}^s}^2,$$
therefore 
%the choice $k=L$ entails
\begin{align}
\E\big[IV\big] & \lesssim \delta^2|\log \delta| +L^{-2s} \nonumber \\
& \lesssim \big(\delta^2 |\log \delta|\big)^{1 \bigwedge 2s/(2\nu+d-1)} \bigvee n^{-2s/(2\nu+d)} \label{case very regular}
\end{align}
by definition of $L$ in \eqref{choice of L}, and this result is uniform in $f \in {\mathcal W}^s(M)$.
If $2\nu+d-1 \geq 2s$, we have
\begin{align*}
\sum_{\ell=1}^L \ell^{2\nu+d-1}\|\boldsymbol{f}_\ell\|^2 & \leq L^{2(\nu-s)+d-1}\sum_{\ell=1}^L\ell^{2s}\|\boldsymbol{f}_\ell\|^2 \\
&\leq L^{2(\nu-s)+d-1}\|f\|_{{\mathcal W}^s}^2.
\end{align*}
By definition of $L$ again
%, the choice $k = \lfloor \big(\delta^2 |\log \delta|\big)^{-1/(2\nu+d-1)}\rfloor$ is admissible and yields
we derive
\begin{align} \label{case regular}
\E\big[IV\big] \lesssim &\, L^{-2s} L^{2\nu+d-1}(n^{-1}\bigvee \de^2\abs{\log \de}) \nonumber \\
\lesssim &\, L^{-2s}( n^{-1/(2\nu+d)} \bigvee 1) \le L^{-2s}
%\lesssim \big(\delta^2|\log \delta|\big)^{2s/(2\nu+d-1)}
\end{align}
and this bound is uniform in $f \in {\mathcal W}^s(M)$. Putting together \eqref{case very regular} and \eqref{case regular}, we finally obtain
\begin{equation}  \label{estimation IV}
\E\big[IV\big] \lesssim \big(\delta^2 |\log \delta|\big)^{1 \bigwedge 2s/(2\nu+d-1)} \bigvee n^{-2s/(1\nu+d)}
\end{equation}
uniformly in $f \in {\mathcal W}^s(M), K\in {\mathcal G}^{\nu}(Q)$.
\end{proof}
\begin{proof}[$\bullet$ The term V] We have
\begin{align*}
&\,\|n^{-1/2}(\boldsymbol{K}_{\delta,\ell})^{-1}\boldsymbol{\eta}_\ell\|^2 {\bf 1}_{{\mathcal A}_\ell}{\bf 1}_{\big\{\|\delta \boldsymbol{\dot B}_{\ell}\|_{\text{op}}\leq  a_\ell\big\}}{\bf 1}_{\big\{\|\boldsymbol{K}_\ell \boldsymbol{f}_\ell\| \geq \tfrac{\tau_\ell}{2}\big\}} \\
\leq &\,  n^{-1}\|(\boldsymbol{K}_{\delta,\ell})^{-1}\|_{\text{op}}^2\|\boldsymbol{\eta}_\ell\|^2{\bf 1}_{{\mathcal A}_\ell}{\bf 1}_{\big\{\|\delta \boldsymbol{\dot B}_{\ell}\|_{\text{op}}\leq  a_\ell\big\}}{\bf 1}_{\big\{\|\boldsymbol{K}_\ell \boldsymbol{f}_\ell\| \geq \tfrac{\tau_\ell}{2}\big\}} \\
\lesssim &\, n^{-1}\|(\boldsymbol{K}_{\ell})^{-1}\|_{\text{op}}^2\|\boldsymbol{\eta}_\ell\|^2{\bf 1}_{{\mathcal A}_\ell}{\bf 1}_{\big\{\|\delta \boldsymbol{\dot B}_{\ell}\|_{\text{op}}\leq  a_\ell\big\}}{\bf 1}_{\big\{\|\boldsymbol{K}_\ell \boldsymbol{f}_\ell\| \geq \tfrac{\tau_\ell}{2}\big\}} \\
\lesssim &\,n^{-1}\ell^{2\nu} \|\boldsymbol{\eta}_\ell\|^2 {\bf 1}_{\big\{\|\boldsymbol{K}_\ell \boldsymbol{f}_\ell\| \geq \tfrac{\tau_\ell}{2}\big\}} 
\end{align*}
where we successively used \eqref{eq norme bruit} and \eqref{eq norme bruit inverse} in the same way as for the term IV, the last inequality being obtained thanks to Assumption \ref{spectral behaviour of K}. By Assumption \ref{spectral behaviour of K} again, since 
$$\|\boldsymbol{K}_\ell \boldsymbol{f}_\ell\| \leq \|\boldsymbol{K}_\ell\|_{\text{op}} \|\boldsymbol{f}_\ell\| \leq Q_1(K)\ell^\nu \|\boldsymbol{f}_\ell\|$$
we derive
$${\bf 1}_{\big\{\|\boldsymbol{K}_\ell \boldsymbol{f}_\ell\| \geq \tfrac{\tau_\ell}{2}\big\}} \leq {\bf 1}_{\big\{\|\boldsymbol{f}_\ell\| \geq Q_1(K)^{-1}\tfrac{\tau_\ell}{2} \ell^\nu\big\}}={\bf 1}_{\big\{\|\boldsymbol{f}_\ell\| \geq c\ell^{\nu+(d-1)/2}n^{-1/2}(\log n)^{1/2}\big\}}$$
for some constant $c$ that depends on $Q_1(K)$ and the pre-factor $\mu_0$ in the choice of $\tau_\ell$ only. Since $\E[\|\boldsymbol{\eta}_\ell\|^2] = |\Lambda_\ell| \lesssim \ell^{d-1}$, we infer, for any $1 \leq k \leq L$
\begin{align*}
\E[V] &\, \lesssim n^{-1}\sum_{\ell=1}^L\ell^{2\nu+d-1}{\bf 1}_{\big\{\|\boldsymbol{f}_\ell\| \geq c\,\displaystyle{\ell^{\nu+(d-1)/2}}n^{-1/2}(\log n)^{1/2}\big\}} \\
&\, \lesssim n^{-1}\big(\sum_{\ell=1}^k \ell^{2\nu+d-1}+\sum_{\ell=k+1}^Ln(\log n)^{-1}\|\boldsymbol{f}_\ell\|^2\big) \\
&\, \leq n^{-1}k^{2\nu+d}+(\log n)^{-1}\sum_{\ell > k} \|\boldsymbol{f}_\ell\|^2\\
&\,\lesssim n^{-1}k^{2\nu+d}+(\log n)^{-1}\|f\|_{{\mathcal W}^s}^2k^{-2s}.
\end{align*}
The admissible choice $k = \lfloor \big(n(\log n)^{-1/2}\big)^{1/(2(s+\nu)+d)}\rfloor \land (\de^2)^{-1/(2\nu+d-1)}$ yields
\begin{align} \label{estimation V}
\E[V] & \lesssim  n^{-1}k^{\nu+d}+k^{-2s}  \nonumber \\
& \lesssim \big(n^{-1} \log n\big)^{2s/(2(s+\nu)+d)}+ k^{-2s} \nonumber \\
& \lesssim \big(n^{-1} \log n\big)^{2s/(2(s+\nu)+d)} \bigvee (\delta^2)^{2s/(2\nu+d-1)}
\end{align}
uniformly in $f \in {\mathcal W}^s(M),  K\in {\mathcal G}^{\nu}(Q)$.
\end{proof}
\begin{proof}[$\bullet$ The term VI] We further bound the term $VI$ via
$$VI \leq VIII+IX,$$
with 
\begin{align*}
VIII & = \sum_{\ell=1}^L\|\delta(\boldsymbol{K}_{\delta,\ell})^{-1}\boldsymbol{\dot B}_\ell\boldsymbol{f}_\ell\|^2{\bf 1}_{{\mathcal A}_\ell}{\bf 1}_{\big\{\|\delta \boldsymbol{\dot B}_\ell\|_{\text{op}} > a_\ell\big\}},\\
IX & = \sum_{\ell=1}^L\|\delta(\boldsymbol{K}_{\delta,\ell})^{-1}\boldsymbol{\dot B}_\ell\boldsymbol{f}_\ell\|^2{\bf 1}_{{\mathcal A}_\ell\, \cap \,{\mathcal B}_\ell}{\bf 1}_{\big\{\|\boldsymbol{K}_\ell \boldsymbol{f}_\ell\| < \tfrac{\tau_\ell}{2}\big\}}.
\end{align*}
On ${\mathcal A}_\ell$, we have
$$\|\delta(\boldsymbol{K}_{\delta,\ell})^{-1}\boldsymbol{\dot B}_\ell\boldsymbol{f}_\ell\|^2 \lesssim \delta^2\,\kappa_\ell^2\|\boldsymbol{\dot B}_\ell\|_{\text{op}}^2\|\boldsymbol{f}_\ell\|^2$$
hence
\begin{align*}
\E\big[VIII\big] & \lesssim \delta^2\sum_{\ell=1}^L \kappa_\ell^2\|\boldsymbol{f}_\ell\|^2\E\big[\|\boldsymbol{\dot B}_\ell\|_{\text{op}}^2{\bf 1}_{\big\{\|\delta \boldsymbol{\dot B}_\ell\|_{\text{op}} > a_\ell\big\}}\big] \\
& \leq \delta^2\sum_{\ell=1}^L \kappa_\ell^2\|\boldsymbol{f}_\ell\|^2\E\big[\|\boldsymbol{\dot B}_\ell\|_{\text{op}}^4\big]^{1/2}\PP\big(\|\delta \boldsymbol{\dot B}_\ell\|_{\text{op}} > a_\ell\big)^{1/2} \\
&\lesssim \delta^2\sum_{\ell=1}^L \kappa_\ell^2\|\boldsymbol{f}_\ell\|^2|\Lambda_\ell| \delta^{\,c_0\rho^2|\Lambda_\ell|^2/2\lambda_0^2} \\
& \lesssim |\log \delta| \max_{1 \leq \ell \leq L} \delta^{\,c_0\rho^2|\Lambda_\ell|^2/2\lambda_0^2} \|f\|_{\bH}^2
\end{align*} 
applying successively Cauchy-Schwarz, the moment bound \eqref{moment bound norm operator} and Lemma \ref{concentration operator}. Indeed, since $a_\ell = \rho/\kappa_\ell$, by definition of $\kappa_\ell$ in \eqref{def kappa ell}, we infer
\begin{align} 
\PP\big(\|\delta \boldsymbol{\dot B}_\ell\|_{\text{op}} > a_\ell\big) & \leq \PP\big(|\Lambda_\ell|^{-1/2}\|\boldsymbol{\dot B}_\ell\|_{\text{op}} > |\Lambda_\ell|^{-1/2}\tfrac{\rho}{\kappa_\ell}\delta^{-1}\big) \nonumber \\
& = \PP\big(|\Lambda_\ell|^{-1/2}\|\boldsymbol{\dot B}_\ell\|_{\text{op}} > \tfrac{\rho}{\lambda_0}|\log \delta|^{1/2}\big) \nonumber \\
& \leq \exp\big(-c_0\tfrac{\rho^2}{\lambda_0^2}|\log \delta||\Lambda_\ell|^2\big)=\delta^{\,c_0\rho^2|\Lambda_\ell|^2/\lambda_0^2}  \label{ineg dev op}
\end{align}
by \eqref{first ineq vector} of Lemma \ref{concentration vector gaussian} since $\tfrac{\rho}{\lambda_0}|\log \delta|^{1/2} \geq \beta_0$ for sufficiently small $\lambda_0$ thanks to the assumption $\delta \leq \delta_0 < 1$.
Finally, since $\Lambda_\ell$ is non-empty, by taking $\lambda_0$ sufficiently small, we conclude
\begin{equation} \label{estimation VIII}
\E\big[VIII\big] \lesssim \delta^2
\end{equation}
uniformly in $f \in {\mathcal W}^s(M)$. We now turn to the term $IX$. Observe first that
\begin{equation} \label{trick LD}
{\bf 1}_{{\mathcal B}_\ell} {\bf 1}_{\big\{\|\boldsymbol{K}_\ell \boldsymbol{f}_\ell\| < \tfrac{\tau_\ell}{2}\big\}} \leq {\bf 1}_{\big\{n^{-1/2}\|\boldsymbol{\eta}_\ell\| \geq \tfrac{\tau_\ell}{2}\big\}}.
\end{equation}
We reproduce the steps we used for the term $VIII$, replacing the event $\{\|\delta \boldsymbol{\dot B}_\ell\|_{\text{op}} > a_\ell\}$ by $\{n^{-1/2}\|\boldsymbol{\eta}_\ell\| \geq \tfrac{\tau_\ell}{2}\}$. We obtain
$$\E\big[IX\big] \lesssim \delta^2\sum_{\ell=1}^L \kappa_\ell^2\|\boldsymbol{f}_\ell\|^2|\Lambda_\ell| \PP\big(n^{-1/2}\|\boldsymbol{\eta}_\ell\| \geq \tfrac{\tau_\ell}{2}\big)^{1/2}.$$
By definition of $\tau_\ell$ in \eqref{def tau ell} and Lemma \ref{first ineq vector}, we have
\begin{align}
\PP\big(n^{-1/2}\|\boldsymbol{\eta}_\ell\|>\tfrac{\tau_\ell}{2}\big) & = \PP\big(|\Lambda_\ell|^{-1/2}\|\boldsymbol{\eta}_\ell\| > \tfrac{\mu_0}{2}(\log n)^{1/2}\big) \nonumber\\
& \leq \exp\big(-c_1\tfrac{\mu_0^2}{4}\log n\big) = n^{-c_1\mu_0^2/4} \label{ineg dev norme vector}
\end{align}
since $\tfrac{\mu_0}{2}(\log n)^{1/2} \geq \beta_1$ for large enough $\mu_0$. It follows that
\begin{equation} \label{estimation IX}
\E\big[IX\big] \lesssim |\log \delta|\|f\|_{\bH}^2\,n^{-c_1\mu_0^2/4} \lesssim n^{-1} |\log \delta| 
\end{equation}
by taking $\mu_0$ sufficiently large. The bound is uniform in $f \in {\mathcal W}^s(M)$. Putting together the estimates \eqref{estimation VIII} and \eqref{estimation IX}, we derive
\begin{equation} \label{estimation VI}
\E\big[VI\big] \lesssim \delta^2+n^{-1} |\log \delta| 
\end{equation}
for large enough $n$, uniformly in $f \in {\mathcal W}^s(M)$.
\end{proof}
\begin{proof}[$\bullet$ The term VII]. The arguments needed here are quite similar to those we used for the term $VI$. On ${\mathcal A}_\ell$, we have
$$\|n^{-1/2}(\boldsymbol{K}_{\delta,\ell})^{-1}\boldsymbol{\eta}_\ell\|^2 \leq n^{-1}\kappa_\ell^2\|\boldsymbol{\eta}_\ell\|^2,$$
hence, using \eqref{trick LD}, the fact that $\E\big[\|\boldsymbol{\eta}_\ell\|^2\big] = |\Lambda_\ell| \lesssim \ell^{d-1}$ together with $\kappa_\ell \leq n^{1/2}$ by definition \eqref{def kappa ell}, we successively obtain
\begin{align*}
\E\big[VII\big] & \leq n^{-1}\sum_{\ell= 1}^L\kappa_\ell^2\E\big[\|\boldsymbol{\eta}_\ell\|^2\big]\Big(\PP\big(\|\delta \boldsymbol{\dot B}_\ell\|_{\text{op}} > a_\ell\big)+\PP\big(n^{-1/2}\|\boldsymbol{\eta}_\ell\|>\tfrac{\tau_\ell}{2}\big)\Big) \\
& \lesssim \max_{1 \leq \ell \leq L}\big\{\PP\big(\|\delta \boldsymbol{\dot B}_\ell\|_{\text{op}} > a_\ell\big)+\PP\big(n^{-1/2}\|\boldsymbol{\eta}_\ell\|>\tfrac{\tau_\ell}{2}\big)\big\} \sum_{\ell=1}^L \ell^{d-1}\\
& \lesssim L^{d-1}\big(\delta^{\,c_0\rho^2/\lambda_0^2} + n^{-c_1\mu_0^2/4}\big)
\end{align*}
where we applied \eqref{ineg dev op} and \eqref{ineg dev norme vector} to obtain the last inequality. The choice of $L$ in \eqref{choice of L} leads to
\begin{equation} \label{estimation VII}
\E\big[VII\big] \lesssim (\delta^2)^{-\frac{d-1}{2\nu+d-1}+\frac{c_0\rho^2}{\lambda_0^2}} + n^{\frac{1}{2\nu+d}-\frac{c_1\mu_0^2}{4}} \lesssim \delta^2 \bigvee n^{-1}
\end{equation}
by taking $\lambda_0$ sufficiently small and $\mu_0$ sufficiently large.
\end{proof}
\begin{proof}[$\bullet$ The term I, conclusion] We put together the estimates \eqref{estimation IV}, \eqref{estimation V}, \eqref{estimation VI} and \eqref{estimation VII}. We obtain
\begin{equation} \label{estimation I}
\E\big[I\big] \lesssim \big(\delta^2 |\log \delta|\big)^{1 \bigwedge 2s/(2\nu+d-1)}
\bigvee \big(n^{-1}\log n\big)^{2s/(2(s+\nu)+d)}
\end{equation}
uniformly in $f \in {\mathcal W}^s(M)$.
\end{proof}
\begin{proof}[$\bullet$ The term II]  We claim the following inequality
\begin{equation} \label{inclusion inverse operator}
{\bf 1}_{{\mathcal A}^c} \leq {\bf 1}_{\big\{\|(\boldsymbol{K}_\ell)^{-1}\|_{\text{op}} \geq \tfrac{\kappa_\ell}{2}\big\}}+ {\bf 1}_{\big\{\|\boldsymbol{K}_{\delta,\ell}-\boldsymbol{K}_\ell\|_{\text{op}}\geq \kappa_\ell^{-1}\big\}},
\end{equation}
a consequence of the following elementary lemma
\begin{lemma} \label{elementary lemma}
Let $A$ and $B$ be two bounded operators with bounded inverse. If $\|B^{-1}\| \geq \kappa$ for some $\kappa >0$, then
either $\|A^{-1}\| \geq \kappa/2$ or $\|A-B\| \geq 1/\kappa$.
\end{lemma}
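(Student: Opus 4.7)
The plan is to proceed by contraposition: assume that both alternatives fail, i.e. $\|A^{-1}\| < \kappa/2$ and $\|A-B\| < 1/\kappa$, and derive that $\|B^{-1}\| < \kappa$, contradicting the hypothesis.

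The key identity is the factorisation $B = A\bigl(I - A^{-1}(A-B)\bigr)$, which gives
\begin{equation*}
B^{-1} = \bigl(I - A^{-1}(A-B)\bigr)^{-1} A^{-1}.
\end{equation*}
Under the two assumptions above, one has $\|A^{-1}(A-B)\| \le \|A^{-1}\|\,\|A-B\| < (\kappa/2)(1/\kappa) = 1/2$, so the operator $I - A^{-1}(A-B)$ is invertible by the Neumann series, with
\begin{equation*}
\bigl\|\bigl(I - A^{-1}(A-B)\bigr)^{-1}\bigr\| \le \sum_{i\ge 0} 2^{-i} = 2.
\end{equation*}
Combining these two estimates yields $\|B^{-1}\| \le 2\|A^{-1}\| < \kappa$, which contradicts $\|B^{-1}\| \ge \kappa$ and completes the contraposition.

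There is essentially no obstacle here: the argument is a routine Neumann-series perturbation bound. The only point to watch is that the factorisation and Neumann expansion both require $I - A^{-1}(A-B)$ to have norm strictly less than one, which is exactly what the two strict inequalities $\|A^{-1}\| < \kappa/2$ and $\|A-B\| < 1/\kappa$ provide through the factor $1/2$; choosing any other split of $1$ would give the same lemma with a different constant in place of $\kappa/2$ and $1/\kappa$, but the split $1/2 \cdot 2 = 1$ is what makes the statement symmetric.
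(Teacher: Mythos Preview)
Your proof is correct and follows essentially the same Neumann-series perturbation argument as the paper. The only cosmetic difference is that the paper bounds $\|B^{-1}-A^{-1}\|$ and invokes the triangle inequality to reach the contradiction, whereas you bound $\|B^{-1}\|$ directly via the factorisation $B^{-1}=(I-A^{-1}(A-B))^{-1}A^{-1}$; your route is slightly more streamlined but the underlying idea is identical.
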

\begin{proof}[Proof of Lemma \ref{elementary lemma}] Write $B=A+\xi$. 
Assume that $\|A^{-1}\|<\kappa/2$. By the triangle inequality, $\|(A+\xi)^{-1}-A^{-1}\| \geq \kappa/2$. We proceed by contradiction: suppose that $\|\xi\| \leq  1/\kappa$. Then we have
$\|A^{-1}\xi\| \leq \|A^{-1}\|\|\xi\| \leq 1/2<1$ and a standard Neumann series argument entails 
\begin{align*}
\|(A+\xi)^{-1}-A^{-1}\| = &  \|(I+A^{-1}\xi)^{-1}A^{-1}-A^{-1}\| \\
= &  \|\sum_{i \geq 1}(-1)^i(A^{-1})^{i+1}\xi^i\| \\
\leq &  \sum_{i \geq 1}\|A^{-1}\|^{i+1} \|\xi\|^i \\
< &  \frac{\kappa}{2} \sum_{i \geq 1}\Big(\frac{\kappa}{2}\Big)^i \Big(\frac{1}{\kappa}\Big)^i = \frac{\kappa}{2},
\end{align*}
a contradiction.
\end{proof}
By Assumption \eqref{spectral behaviour of K}, we have $\|(\boldsymbol{K}_\ell)^{-1}\|_{\text{op}} \leq Q_2(K)\ell^\nu$. Therefore
$${\bf 1}_{\big\{\|(\boldsymbol{K}_\ell)^{-1}\|_{\text{op}} \geq \tfrac{\kappa_\ell}{2}\big\}} \leq {\bf 1}_{\big\{\ell \geq c \big(\delta^2 |\log \delta|\big)^{1/(2\nu+d-1)}\bigwedge n^{1/(2\nu)}\big\}}$$
for some constant $c$ that depends on $Q_2(K)$ and $\lambda_0$ only. For the second term in the right-hand side of \eqref{inclusion inverse operator}, we apply by Lemma \ref{concentration operator}  in the same way as we obtained \eqref{estimation VIII} for the term $VIII$. We derive
\begin{align*}
\PP\big(\|\delta \boldsymbol{\dot B}_\ell\|_{\text{op}} \geq \kappa_\ell^{-1}\big) & = \PP\big(|\Lambda_\ell|^{-1/2}\|\boldsymbol{\dot B}_\ell\|_{\text{op}} \geq \mu_0^{-1}|\log \delta|^{1/2}\big) \\
& \leq \exp\big(-\tfrac{c_0}{\mu_0^2}|\log \delta||\Lambda_n|^2\big)  = \delta^{\,c_0|\Lambda_\ell|^2/\mu_0} \leq  \delta^{\,c_0/\mu_0} 
\end{align*}
for large enough $\mu_0$. Therefore
\begin{align*}
\E\big[II\big]  \leq & \sum_{\ell=1}^L \|\boldsymbol{f}_\ell\|^2\Big({\bf 1}_{\big\{\ell \geq c\big(\delta^2 |\log \delta|\big)^{1/(2\nu+d-1)}\bigwedge n^{1/(2\nu)}\big\}}+\PP\big(\|\delta \boldsymbol{\dot B}_\ell\|_{\text{op}} \geq \kappa_\ell^{-1} \big)\Big) \\
\lesssim &\;\big(n^{-s/\nu} \bigvee \big(\delta^2 |\log \delta|\big)^{2s/(2\nu+d-1)}\big)\|f\|_{{\mathcal W}^s}^2+\|f\|_{\bH}^2\,\delta^{\,c_0/\mu_0}. 
\end{align*}
We finally obtain
\begin{align}
 \E\big[II\big] & \lesssim \big(\delta^2 \log\delta^{-1}\big)^{2s/(2\nu+d-1)} + \delta^2 +n^{-s/\nu} \nonumber \\
 & \lesssim  \big(\delta^2 |\log \delta|\big)^{1 \bigwedge 2s/(2\nu+d-1)} \bigvee \big(n^{-1}\log n\big)^{2s/(2(s+\nu)+d)}  \label{estimation II}
\end{align}
%by definition of $L$ in \eqref{choice of L}, 
uniformly in $f \in {\mathcal W}^s(M),  K\in {\mathcal G}^{\nu}(Q)$. 
\end{proof}
\begin{proof}[$\bullet$ The term III] Obviously, the decomposition \eqref{matrix gaussian deconvolution} entails
$${\bf 1}_{{\mathcal B}^c}={\bf 1}_{\big\{\|\boldsymbol{K}_\ell \boldsymbol{f}_\ell + n^{-1/2}\boldsymbol{\eta}_\ell\| < \tau_\ell\big\}}\leq  {\bf 1}_{\big\{\|\boldsymbol{K}_\ell \boldsymbol{f}_\ell\| \leq 2 \tau_\ell\big\}}+  {\bf 1}_{\big\{n^{-1/2}\|\boldsymbol{\eta}_\ell\| > \tau_\ell\big\}}.$$
On the one hand, we have
$$\|\boldsymbol{K}_{\ell}\boldsymbol{f}_\ell\| \geq \|(\boldsymbol{K}_\ell)^{-1}\|_{\text{op}}^{-1}\|\boldsymbol{f}_\ell\| \geq Q_2(K)^{-1}{\ell}^{-\nu} \|\boldsymbol{f}_\ell\|$$ 
by Assumption \ref{spectral behaviour of K}. By definition of $\tau_\ell$ in \eqref{def tau ell} it follows that, for any $1 \leq k \leq L$,
\begin{align*}
\sum_{\ell = 1}^L \|\boldsymbol{f}_\ell\|^2{\bf 1}_{\big\{\|\boldsymbol{K}_\ell \boldsymbol{f}_\ell\| \leq 2 \tau_\ell\big\}} & \leq \sum_{\ell=1}^L \|\boldsymbol{f}_\ell\|^2 
{\bf 1}_{\big\{\|\boldsymbol{f}_\ell\| \leq 2Q_2(K)^{-1}\ell^{\nu}\tau_\ell\big\}} \\
& \lesssim \sum_{\ell=1}^k \ell^{2\nu}\tau_\ell^2 + \sum_{\ell=k+1}^{L}\|\boldsymbol{f}_\ell\|^2 \\
& \lesssim (n^{-1}\log n) \sum_{\ell=1}^k \ell^{2\nu+d-1}+\|f\|_{{\mathcal W}^s}^2k^{-2s} \\
& \lesssim (n^{-1} \log n)\, k^{2\nu+d}+\|f\|_{{\mathcal W}^s}^2k^{-2s}.
\end{align*}
The choice $k = \lfloor \big(n^{1/2}(\log n)^{-1/2}\big)^{1/(2(s+\nu)+d)}\rfloor$ yields
\begin{equation} \label{est prelim III}
\sum_{\ell = 1}^L \|\boldsymbol{f}_\ell \|^2{\bf 1}_{\big\{\|\boldsymbol{K}_\ell \boldsymbol{f}_\ell\| \leq 2 \tau_\ell\big\}} \lesssim \big(n^{-1}\log n\big)^{2s/(2(s+\nu)+d)}
\end{equation} 
uniformly in $f \in {\mathcal W}^s(M),  K\in {\mathcal G}^{\nu}(Q)$. On the other hand, by \eqref{ineg dev norme vector}, we have
$$\sum_{\ell=1}^L\|\boldsymbol{f}_\ell \|^2\PP\big(n^{-1/2}\|\boldsymbol{\eta}_\ell\| > \tau_\ell\big\}\big) \lesssim  \|f\|_{\bH}^2\,n^{-c_1\mu_0^2/4} \lesssim n^{-1}$$
by taking $\mu_0$ large enough, uniformly in $f \in {\mathcal W}^s(M)$. Combining this last estimate with \eqref{est prelim III} we infer
\begin{equation} \label{estimation III}
\E\big[III\big] \lesssim \big(n^{-1}\log n\big)^{2s/(2(s+\nu)+d)}+n^{-1}
\end{equation}
uniformly in $f \in {\mathcal W}^{s}(M),  K\in {\mathcal G}^{\nu}(Q)$.
\end{proof}
\begin{proof}[Proof of Theorem \ref{upper bounds}, completion]
It remains to piece together the estimates \eqref{controle biais}, \eqref{estimation I}, \eqref{estimation II} and \eqref{estimation III}.
\end{proof} 
\subsection{Proof of Theorem \ref{lower bounds}} 
\subsubsection*{Preliminaries: a Bayesian inequality}
For every $\ell \geq 1$, denote by ${\mathcal M}_\ell$ the set of $|\Lambda_\ell|\times|\Lambda_\ell|$ matrices. We denote by ${\mathcal M}^\nu_\ell(Q)$ the subset of ${\mathcal M}_\ell$ of matrices $\boldsymbol{K}_\ell$ such that
$$\|\boldsymbol{K}_\ell\|_{\mathrm{op}}\leq Q_2\ell^{-\nu}\;\;\text{and}\;\;\|(\boldsymbol{K}_\ell)^{-1}\|_{\mathrm{op}}\leq Q_1\ell^{\nu}.$$
Define
\begin{equation} \label{def K0}
{\boldsymbol K}^0_\ell = c_1\ell^{-\nu} {\boldsymbol I}_\ell
\end{equation}
where ${\boldsymbol I}_\ell$ denotes the identity in ${\mathcal M}_\ell$ and $c_1>0$ is such that
%$\boldsymbol{K}_\ell^0 \in {\mathcal M}_\ell^\nu(Q)$.
$$ 1/Q_1 < c_1 < Q_2$$
so that $\boldsymbol{K}_\ell^0 \in {\mathcal M}_\ell^\nu(Q)$.
We assume a Bayesian approach and pick $\boldsymbol{K}_\ell$ at random, with 
$$\boldsymbol{K}_\ell = \boldsymbol{K}_\ell^0+c_2\,\delta \boldsymbol{\dot W}_\ell,$$
for some $c_2>0$ and where $\boldsymbol{\dot W}_\ell$ is an independent copy of $\boldsymbol{\dot B}_\ell$. Define $\boldsymbol{g}_\ell = (1\;0\;\ldots\; 0)^T$ as the first canonical (column) vector in $\R^{|\Lambda_\ell|}$. Define also
\begin{equation} \label{def theta}
\boldsymbol{\vartheta} = -(\boldsymbol{K}_\ell^0)^{-1}(\boldsymbol{K}_\ell-\boldsymbol{K}_\ell^0)(\boldsymbol{K}_\ell^0)^{-1}\boldsymbol{g}_\ell
\end{equation}
and
\begin{equation} \label{def of X}
\boldsymbol{X}=-(\boldsymbol{K}_\ell^0)^{-1}(\boldsymbol{K}_{\delta,\ell}-\boldsymbol{K}_\ell^0)(\boldsymbol{K}_\ell^0)^{-1}\boldsymbol{g}_\ell.
\end{equation}
\begin{lemma}\label{bayesian inequality} There exists a constant $c_3$ depending on $\nu,Q$ and $c_2$ only such that
\begin{equation} \label{inegalite bayes}
\inf_T\PP\big(\delta^{-2}\ell^{-4\nu}|\Lambda_\ell|^{-1}\|T(\boldsymbol{X})-\boldsymbol{\vartheta}\|^2\geq c_3\big) \geq \tfrac{1}{2},
\end{equation}
where the infimum is taken among all estimators $T$ based on the observation $\boldsymbol{X}$.
\end{lemma}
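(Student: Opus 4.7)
The plan is to reduce the claim to a standard Gaussian signal-plus-noise problem and then apply Anderson's lemma together with a chi-squared tail estimate.

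First, exploit that $\boldsymbol{K}_\ell^0 = c_1\ell^{-\nu}\boldsymbol{I}_\ell$ is a scalar multiple of the identity, so $(\boldsymbol{K}_\ell^0)^{-1} = c_1^{-1}\ell^{\nu}\boldsymbol{I}_\ell$. Substituting the definition $\boldsymbol{K}_{\delta,\ell} = \boldsymbol{K}_\ell + \delta\boldsymbol{\dot B}_\ell$ and $\boldsymbol{K}_\ell - \boldsymbol{K}_\ell^0 = c_2\delta\boldsymbol{\dot W}_\ell$ into \eqref{def theta} and \eqref{def of X}, I obtain the clean identities
\begin{equation*}
\boldsymbol{\vartheta} = -c_1^{-2}\ell^{2\nu}\delta\,c_2\,\boldsymbol{W},\qquad \boldsymbol{X} = -c_1^{-2}\ell^{2\nu}\delta\,(c_2\boldsymbol{W}+\boldsymbol{B}),
\end{equation*}
where $\boldsymbol{W}$ and $\boldsymbol{B}$ are independent standard Gaussian vectors in $\R^{|\Lambda_\ell|}$ (the first columns of $\boldsymbol{\dot W}_\ell$ and $\boldsymbol{\dot B}_\ell$). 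Thus estimating $\boldsymbol{\vartheta}$ from $\boldsymbol{X}$ is (up to a deterministic scaling) the problem of estimating the signal $c_2\boldsymbol{W}$ from the noisy observation $c_2\boldsymbol{W}+\boldsymbol{B}$.

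Next, I would compute the posterior. Under the jointly Gaussian law, the conditional distribution of $\boldsymbol{\vartheta}$ given $\boldsymbol{X}$ is $\mathcal{N}\bigl(\tfrac{c_2^2}{c_2^2+1}\boldsymbol{X},\,\sigma^2\boldsymbol{I}_{|\Lambda_\ell|}\bigr)$ with
$\sigma^2 = \tfrac{c_2^2}{c_2^2+1}\,c_1^{-4}\ell^{4\nu}\delta^2$. In particular the residual $\boldsymbol{Z}:=\boldsymbol{\vartheta} - \E[\boldsymbol{\vartheta}\mid \boldsymbol{X}]$ is Gaussian with covariance $\sigma^2\boldsymbol{I}_{|\Lambda_\ell|}$ and is independent of $\boldsymbol{X}$. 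For an arbitrary estimator $T$, write
\begin{equation*}
T(\boldsymbol{X})-\boldsymbol{\vartheta} = \bigl(T(\boldsymbol{X})-\E[\boldsymbol{\vartheta}\mid\boldsymbol{X}]\bigr) - \boldsymbol{Z}.
\end{equation*}
Conditionally on $\boldsymbol{X}$, the first term is deterministic while $\boldsymbol{Z}$ has the centered Gaussian law $\mathcal{N}(0,\sigma^2\boldsymbol{I})$. Anderson's lemma applied to the symmetric convex ball of radius $r$ then yields, pointwise in $\boldsymbol{X}$,
\begin{equation*}
\PP\bigl(\|T(\boldsymbol{X})-\boldsymbol{\vartheta}\|^2\ge r\,\big|\,\boldsymbol{X}\bigr) \ge \PP\bigl(\|\boldsymbol{Z}\|^2\ge r\bigr).
\end{equation*}
Taking expectation in $\boldsymbol{X}$ preserves the inequality.

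Finally, since $\|\boldsymbol{Z}\|^2/\sigma^2$ is a $\chi^2_{|\Lambda_\ell|}$ random variable, I only need a uniform lower bound on the median of $\chi^2_k/k$: there exists $\gamma>0$ (in fact $\gamma\ge 0.45$ works even for $k=1$, and $\gamma\to 1$ as $k\to\infty$) such that $\PP(\chi^2_k\ge \gamma k)\ge 1/2$ for every $k\ge 1$. Choosing $c_3 \le \gamma\,\tfrac{c_2^2}{c_1^4(c_2^2+1)}$ gives $\PP(\|\boldsymbol{Z}\|^2\ge c_3\,\delta^2\ell^{4\nu}|\Lambda_\ell|)\ge 1/2$, which delivers \eqref{inegalite bayes} with a constant $c_3$ depending only on $\nu$, $Q$ and $c_2$.

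The main subtlety is the conceptual step of invoking Anderson's lemma correctly: one must justify that the residual $\boldsymbol{Z}$ is independent of $\boldsymbol{X}$ (a standard Gaussian regression fact) so that the conditional tail bound given $\boldsymbol{X}$ has the same distribution as the unconditional one, after which the argument is essentially routine.
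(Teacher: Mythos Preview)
Your proof is correct and follows essentially the same route as the paper: both compute the Gaussian posterior $\mathcal{L}(\boldsymbol{\vartheta}\mid\boldsymbol{X})=\mathcal{N}\bigl(\tfrac{c_2^2}{1+c_2^2}\boldsymbol{X},\,\tfrac{c_2^2}{1+c_2^2}c_1^{-4}\ell^{4\nu}\delta^2\boldsymbol{I}_\ell\bigr)$, apply Anderson's lemma conditionally to replace $T(\boldsymbol{X})$ by the posterior mean, and then exploit that the residual $\boldsymbol{\vartheta}-\E[\boldsymbol{\vartheta}\mid\boldsymbol{X}]$ is a scaled $\chi^2_{|\Lambda_\ell|}$ independent of $\boldsymbol{X}$, with a median bounded below uniformly in $|\Lambda_\ell|$. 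Your explicit identification of $\boldsymbol{W},\boldsymbol{B}$ as first columns and the quantitative choice $c_3\le\gamma\,c_2^2/\bigl(c_1^4(c_2^2+1)\bigr)$ are just a slightly more concrete presentation of the same argument.
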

\begin{proof}[Proof of Lemma \ref{bayesian inequality}]
We have $\boldsymbol{X}=\boldsymbol{\vartheta} + \boldsymbol{\varepsilon}$, with
$$\boldsymbol{\vartheta} =  -(\boldsymbol{K}_\ell^0)^{-1}c_2\delta \boldsymbol{\dot W}(\boldsymbol{K}_\ell^0)^{-1}\boldsymbol{g}_\ell\;\;\text{and}\;\;
\boldsymbol{\varepsilon} = -(\boldsymbol{K}_\ell^0)^{-1}\delta \boldsymbol{\dot B}(\boldsymbol{K}_\ell^0)^{-1}\boldsymbol{g}_\ell.$$
By construction, $\boldsymbol{\vartheta}$ and $\boldsymbol{\varepsilon}$ are two independent Gaussian random vectors. More precisely, by definition of $\boldsymbol{g}_\ell$ and with obvious notation, we have
$$\boldsymbol{\vartheta} \sim {\mathcal N}\big(0,\delta^2c_2^2 c_1^{-4}\ell^{4\nu}\boldsymbol{I}_\ell\big)\;\;\text{and}\;\;\boldsymbol{\varepsilon} \sim {\mathcal N}\big(0,\delta^2c_1^{-4}\ell^{4\nu}\boldsymbol{I}_\ell\big).$$
It readily follows that the posterior law of $\boldsymbol{\vartheta}$ given $\boldsymbol{X}$ is
$${\mathcal L}(\boldsymbol{\vartheta}\,\big|\,\boldsymbol{X}) = {\mathcal N}\Big(\frac{c_2^2}{1+c_2^2}\boldsymbol{X}, \delta^2\frac{c_2^2}{1+c_2^2}c_1^{-4}\ell^{4\nu}\boldsymbol{I}_\ell\Big).$$
Now, for $c_3>0$, define 
$$H_\delta(c_3,\boldsymbol{x}) = {\bf 1}_{\displaystyle \{\delta^{-2}\ell^{-4\nu}|\Lambda_\ell|^{-1}\|\boldsymbol{x}\|^2 \geq c_3\}}\;\;\text{for}\;\;\boldsymbol{x}\in \R^{|\Lambda_\ell|}.$$ 
Setting $z(\boldsymbol{X})=T(\boldsymbol{X})-\E[\boldsymbol{\vartheta}\,|\,\boldsymbol{X}]$, we have
\begin{align*}
 \E\big[H_\delta\big(c_3,T(\boldsymbol{X})-\boldsymbol{\vartheta}\big)\,|\,\boldsymbol{X}\big] & = \E\big[H_\delta\big(c_3,z(\boldsymbol{X})+\E[\boldsymbol{\vartheta}\,|\,\boldsymbol{X}]-\boldsymbol{\vartheta}\big)\,|\,\boldsymbol{X}\big]\\
& \geq  \E\big[H_\delta\big(c_3,\E[\boldsymbol{\vartheta}\,|\,\boldsymbol{X}]-\boldsymbol{\vartheta}\big)\,|\,\boldsymbol{X}\big]
\end{align*}
where we used a version of Anderson's Lemma given in Lemma 10.2 in \cite{IH} p. 157. Indeed, the law of $\E[\boldsymbol{\vartheta}\,|\,\boldsymbol{X}]-\vartheta$ has a centrally symmetric density and the function $H_\delta$ is nonnegative, centrally symmetric, satisfies $H_\delta(0)=0$ and the sets $\{\boldsymbol{x},\;H_\delta(c_3,\boldsymbol{x}) < c\}$ are convex for any $c>0$.

Now, $\|\E[\boldsymbol{\vartheta}\,|\,\boldsymbol{X}]-\boldsymbol{\vartheta}\|^2$ has a $\chi^2$-distribution with $|\Lambda_\ell|$ degrees of freedom, up to a scaling factor of order $\delta^2\ell^{4\nu}$. This means that the sequence of random variables $\delta^{-2}\ell^{-4\nu}|\Lambda_\ell|^{-1}\|\E[\boldsymbol{\vartheta}\,|\,\boldsymbol{X}]-\boldsymbol{\vartheta}\|^2$ is bounded below in probability in $\ell \geq 1$ and $\delta>0$. Since $\E[\boldsymbol{\vartheta}\,|\,\boldsymbol{X}]-\boldsymbol{\vartheta}$ is moreover independent of $\boldsymbol{X}$, it follows that there exists $c_3$ independent of $\delta$ and  $\ell$ such that 
$$\E\big[H_\delta\big(c_3, \E[\boldsymbol{\vartheta}\,|\,\boldsymbol{X}]-\boldsymbol{\vartheta}\big)\,|\,\boldsymbol{X}\big] \geq \tfrac{1}{2}.$$
Integrating with respect to $\boldsymbol{X}$, we obtain \eqref{inegalite bayes} and the result follows.
\end{proof}
\subsubsection*{Proof of Theorem \ref{lower bounds}}
%{\tt [proof complete for $d=1$ only]}
%The technique used here is similar to \cite{EK}, \cite{HR} and is fairly classical. To avoid singularity of the underlying probability measures, we only consider a subclass ${\mathcal F}_0$ %\subset {\mathcal F}^{s,\nu}(D,Q)$ 
%of parameters $(f,K)$ such that $Kf=y_0$ for some fixed $y_0 \in L^2(\mu)$, namely
%$${\mathcal F}_0=\big\{(f,K),\;f=K^{-1}y_0,\;K \in {\mathcal G}^{\nu}(Q)\big\}.$$
%We shall henceforth keep $y_0$ fixed and refer to the parameter $(f,K)$ equivalently just by $K$.
We assume with no loss of generality that $2\nu +d -1 \geq  2s$. (Otherwise, the lower bound $\delta$ trivially follows from the parametric case.)
% and is fairly classical
%We evaluate a lower bound over a finite subset ${\mathcal H} \subset {\mathcal W}^s(M) \times {\mathcal G}^\nu(Q)$.
% and reduce the problem of estimation to a testing problem.
%We denote by $\PP_{(f,K)}$ the law of the observation $(Kf,K+\delta \dot B)$.
% Note that if $(f_+, K^+)$ and $(f_-,K^-)$ are two elements of ${\mathcal W}^s(M) \times {\mathcal G}^\nu(Q)$, then $\PP_{(f_+,K^+)}$ and $\PP_{(f_-,K^-)}$  are mutually absolutely continuous only if
%\begin{equation} \label{non sing}
%K^+f_+=K^-f_-.
%\end{equation}
Let $\Pi^{s,\nu}(M,Q_1)$ denote the set of sequences $\pi = (\pi_\ell)_{\ell \geq 1}$ satisfying
\begin{equation} \label{contrainte piell}
\sum_{\ell \geq 1} \pi_\ell^2 \ell^{2(s+\nu)} \leq \frac{M^2}{Q_1^2}.
\end{equation}
For $\pi \in \Pi^{s,\nu}(M,Q_1)$ and $K \in {\mathcal G}^\nu(Q)$, define $f$ via its coordinates in $H_\ell$ by
$${\boldsymbol f}_\ell = \pi_\ell {\boldsymbol K}_\ell^{-1} {\boldsymbol g}_\ell,\;\;\ell \geq 1,$$
where ${\boldsymbol g}_\ell$ is an arbitrary vector in $\R^{|\Lambda_\ell|}$ with $\|{\boldsymbol g}_\ell\|=1$ (fixed in the sequel). Then
$$\sum_{\ell \geq 1}\ell^{2s}\|\pi_\ell {\boldsymbol K}_\ell^{-1}{\boldsymbol g}_\ell\|^2 \leq \sum_{\ell \geq 1} \pi_\ell^2 \|{\boldsymbol K}_\ell^{-1}\|_{\text{op}}^2\|{\boldsymbol g}_\ell\|^2 \leq Q_1^2 \sum_{\ell \geq 1}\pi_\ell^2 \ell^{2(s+\nu)} \leq M^2$$
since $\pi \in \Pi^{s,\nu}(M,Q_1)$. Therefore $f \in {\mathcal W}^s(M)$. It follows that for an arbitrary estimator $\widehat f$, we have
\begin{align*}
 & \sup_{f\in {\mathcal W}^{s}(M),  K\in {\mathcal G}^\nu(Q)}
  \E\Big[\big\|\widehat f-f\big\|_{\bH}^2\Big] \\
  = & \sup_{f\in {\mathcal W}^{s}(M),  K\in {\mathcal G}^\nu(Q)}
 \sum_{\ell \geq 1}\E\Big[\big\|\widehat {\boldsymbol f}_\ell-{\boldsymbol f}_\ell\big\|^2\Big] \\
 \geq & \sup_{\pi \in \Pi^{s,\nu}(M,Q_1), K \in {\mathcal G}^\nu(Q)}\sum_{\ell \geq 1}\E\Big[\big\|\widehat {\boldsymbol f}_\ell-\pi_\ell {\boldsymbol K}^{-1}_\ell {\boldsymbol g}_\ell\big\|^2\Big].
\end{align*}
\begin{lemma} \label{Bayesian LB}
There exist a choice of $\boldsymbol{g}_\ell$ with $\|\boldsymbol{g}_\ell\|=1$ and constants $c_4, c_5$ (depending on $s,\nu, M, Q$) such that for any $\pi \in \Pi^{s,\nu}(M,Q_1)$, if $|\Lambda_\ell|^{1/2}\delta \leq c_4\,\ell^{-\nu}$, we have
\begin{equation} \label{block LB}
\inf_{\widehat {\boldsymbol f}_\ell}\sup_{K \in {\mathcal G}^{\nu}(Q)} 
\E\big[\|\widehat {\boldsymbol f}_\ell -\pi_\ell{\boldsymbol K}_\ell^{-1} {\boldsymbol g}_\ell\|^2\big] \geq c_5\,\delta^2\ell^{4\nu + d-1}\pi_\ell^2
\end{equation}
where the infimum is taken over all estimators and provided $\delta >0$ is sufficiently small. 
\end{lemma}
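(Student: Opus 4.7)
The plan is to reduce the estimation of $\pi_\ell \boldsymbol K_\ell^{-1}\boldsymbol g_\ell$ to the estimation of the Gaussian vector $\boldsymbol \vartheta$ introduced in \eqref{def theta}, and then transfer the Bayesian lower bound of Lemma \ref{bayesian inequality}. I take $\boldsymbol g_\ell$ to be the first canonical vector in $\R^{|\Lambda_\ell|}$ (as in Lemma \ref{bayesian inequality}), and adopt the Bayesian prior $\boldsymbol K_\ell = \boldsymbol K_\ell^0 + c_2 \delta \boldsymbol{\dot W}_\ell$ with $\boldsymbol{\dot W}_\ell$ independent of the measurement noise $\boldsymbol{\dot B}_\ell$ in $\boldsymbol K_{\delta,\ell}$. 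The observation $\boldsymbol X$ defined in \eqref{def of X} is then a measurable function of $\boldsymbol K_{\delta,\ell}$.

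First, a Neumann series expansion around $\boldsymbol K_\ell^0$ gives, on the high-probability event $\mathcal E_\ell=\bigl\{\|(\boldsymbol K_\ell^0)^{-1}(\boldsymbol K_\ell-\boldsymbol K_\ell^0)\|_{\mathrm{op}}\le 1/2\bigr\}$, the identity
\[
\boldsymbol K_\ell^{-1} = (\boldsymbol K_\ell^0)^{-1} - (\boldsymbol K_\ell^0)^{-1}(\boldsymbol K_\ell - \boldsymbol K_\ell^0)(\boldsymbol K_\ell^0)^{-1} + R_\ell,
\]
with a remainder satisfying $\|R_\ell\|_{\mathrm{op}}\lesssim \|(\boldsymbol K_\ell^0)^{-1}\|_{\mathrm{op}}^3\|\boldsymbol K_\ell-\boldsymbol K_\ell^0\|_{\mathrm{op}}^2\lesssim \ell^{3\nu}\delta^2\|\boldsymbol{\dot W}_\ell\|_{\mathrm{op}}^2$. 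Since $\|\boldsymbol{\dot W}_\ell\|_{\mathrm{op}}$ concentrates around $|\Lambda_\ell|^{1/2}$ by Lemma \ref{concentration operator}, choosing $c_4$ small enough ensures both $\PP(\mathcal E_\ell)\geq 3/4$ (say) and $\boldsymbol K_\ell\in\mathcal M_\ell^\nu(Q)$ on $\mathcal E_\ell$ (the strict inequalities $1/Q_1<c_1<Q_2$ providing the required slack).

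Second, applying this expansion to $\boldsymbol g_\ell$ and multiplying by $\pi_\ell$,
\[
\pi_\ell \boldsymbol K_\ell^{-1}\boldsymbol g_\ell \;=\; \pi_\ell (\boldsymbol K_\ell^0)^{-1}\boldsymbol g_\ell \;+\; \pi_\ell \boldsymbol\vartheta \;+\; \pi_\ell R_\ell \boldsymbol g_\ell.
\]
The term $\pi_\ell (\boldsymbol K_\ell^0)^{-1}\boldsymbol g_\ell$ is deterministic and known, so any candidate estimator $\widehat{\boldsymbol f}_\ell$ built from $\boldsymbol K_{\delta,\ell}$ produces, via $T(\boldsymbol X) := \pi_\ell^{-1}(\widehat{\boldsymbol f}_\ell - \pi_\ell (\boldsymbol K_\ell^0)^{-1}\boldsymbol g_\ell)$, an estimator of $\boldsymbol\vartheta$ depending only on $\boldsymbol X$. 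By the triangle inequality, on $\mathcal E_\ell$,
\[
\|\widehat{\boldsymbol f}_\ell - \pi_\ell \boldsymbol K_\ell^{-1}\boldsymbol g_\ell\| \;\geq\; \pi_\ell\|T(\boldsymbol X)-\boldsymbol\vartheta\| \;-\; \pi_\ell\|R_\ell \boldsymbol g_\ell\|.
\]
Under the constraint $|\Lambda_\ell|^{1/2}\delta\leq c_4\ell^{-\nu}$, the remainder satisfies $\|R_\ell \boldsymbol g_\ell\|\lesssim \ell^{3\nu}\delta^2|\Lambda_\ell|$, which is of strictly smaller order than the typical magnitude $\delta \ell^{2\nu}|\Lambda_\ell|^{1/2}$ predicted by Lemma \ref{bayesian inequality} for $\|T(\boldsymbol X)-\boldsymbol\vartheta\|$, provided $c_4$ is chosen sufficiently small.

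Third, Lemma \ref{bayesian inequality} gives that on an event $\mathcal F$ of probability at least $1/2$ (under the joint law of $\boldsymbol{\dot W}_\ell,\boldsymbol{\dot B}_\ell$) one has $\|T(\boldsymbol X)-\boldsymbol\vartheta\|^2\geq c_3\,\delta^2\ell^{4\nu}|\Lambda_\ell|$. Intersecting with $\mathcal E_\ell$ (which still has probability at least $1/4$), and squaring the triangle inequality above, the remainder is negligible and one obtains $\|\widehat{\boldsymbol f}_\ell-\pi_\ell \boldsymbol K_\ell^{-1}\boldsymbol g_\ell\|^2\gtrsim \pi_\ell^2\,\delta^2\,\ell^{4\nu}|\Lambda_\ell|$ on $\mathcal E_\ell\cap \mathcal F$. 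Integrating yields a Bayes risk lower bound of order $\pi_\ell^2\,\delta^2\,\ell^{4\nu+d-1}$ (using $|\Lambda_\ell|\asymp \ell^{d-1}$), and since $\boldsymbol K_\ell\in\mathcal M_\ell^\nu(Q)$ on $\mathcal E_\ell$, the supremum over $K\in\mathcal G^\nu(Q)$ of the frequentist risk dominates this Bayes risk on $\mathcal E_\ell$, delivering \eqref{block LB}.

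The main obstacle is the joint calibration of $c_2$ and $c_4$: $c_2$ must be small enough that the prior stays inside $\mathcal M_\ell^\nu(Q)$ on a high-probability event, and $c_4$ must be small enough that the remainder $R_\ell$ is genuinely of lower order than the first-order term, while still large enough that the Bayesian lower bound of Lemma \ref{bayesian inequality} (which requires $\boldsymbol\vartheta$ to be genuinely non-degenerate) can be applied. Once these constants are fixed, the transfer from a probability bound to a risk bound is a routine application of Markov's inequality.
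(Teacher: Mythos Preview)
Your approach mirrors the paper's almost exactly: Bayesian prior on $\boldsymbol K_\ell$ around $\boldsymbol K_\ell^0$, Neumann expansion to linearise $\boldsymbol K_\ell^{-1}\boldsymbol g_\ell$ into $(\boldsymbol K_\ell^0)^{-1}\boldsymbol g_\ell+\boldsymbol\vartheta+\text{remainder}$, control of the remainder via the hypothesis $|\Lambda_\ell|^{1/2}\delta\le c_4\ell^{-\nu}$, and then invocation of Lemma~\ref{bayesian inequality}. The transfer from the probability bound to the risk bound, and from Bayes risk to minimax risk by restricting to the event where the prior sits inside $\mathcal M_\ell^\nu(Q)$, is also the same as in the paper.

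There is one genuine gap. You write that $T(\boldsymbol X):=\pi_\ell^{-1}\bigl(\widehat{\boldsymbol f}_\ell-\pi_\ell(\boldsymbol K_\ell^0)^{-1}\boldsymbol g_\ell\bigr)$ is ``an estimator of $\boldsymbol\vartheta$ depending only on $\boldsymbol X$''. This is false as stated: $\widehat{\boldsymbol f}_\ell$ is allowed to depend on the entire matrix $\boldsymbol K_{\delta,\ell}$, whereas $\boldsymbol X$ is only (a bijective image of) its first column. Lemma~\ref{bayesian inequality} gives a lower bound over estimators that are functions of $\boldsymbol X$ alone, so it does not apply to a generic $T(\boldsymbol K_{\delta,\ell})$ without further argument. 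The paper closes this gap by a sufficiency argument: with the choice $\boldsymbol g_\ell=(1,0,\ldots,0)^T$ and $\boldsymbol K_\ell^0$ scalar, $\boldsymbol\vartheta$ depends only on the first column of $\boldsymbol{\dot W}_\ell$; since the columns of $\boldsymbol K_{\delta,\ell}-\boldsymbol K_\ell^0=c_2\delta\boldsymbol{\dot W}_\ell+\delta\boldsymbol{\dot B}_\ell$ are independent, $\boldsymbol X$ is a sufficient statistic for $\boldsymbol\vartheta$, and one may restrict the infimum to estimators of the form $T(\boldsymbol X)$ without loss. You need to insert this step.

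A secondary sloppiness: the claim ``$\boldsymbol K_\ell\in\mathcal M_\ell^\nu(Q)$ on $\mathcal E_\ell$'' does not follow from $\mathcal E_\ell=\{\|(\boldsymbol K_\ell^0)^{-1}(\boldsymbol K_\ell-\boldsymbol K_\ell^0)\|_{\mathrm{op}}\le 1/2\}$ and the choice of $c_4$; it requires the perturbation $c_2\delta\|\boldsymbol{\dot W}_\ell\|_{\mathrm{op}}$ to be smaller than $\min(Q_2-c_1,\,c_1-Q_1^{-1})\ell^{-\nu}$, which is controlled by taking $c_2$ small (using $|\Lambda_\ell|^{1/2}\delta\le c_4\ell^{-\nu}$ and tightness of $|\Lambda_\ell|^{-1/2}\|\boldsymbol{\dot W}_\ell\|_{\mathrm{op}}$). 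The paper handles this separately by bounding $\PP\bigl(\boldsymbol K_\ell\notin\mathcal M_\ell^\nu(Q)\bigr)$ rather than forcing it on $\mathcal E_\ell$; either route works, but your attribution of the control to $c_4$ alone is inaccurate. Your final paragraph shows you are aware that both $c_2$ and $c_4$ need calibration, so this is easily fixed.
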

With \eqref{block LB}, we easily conclude: Define  $L=\lfloor c_6\delta^{-2/(2\nu+d-1)}\rfloor$ with $c_6>0$. For $1 \leq \ell \leq L$, the assumption $|\Lambda|^{1/2}\delta \leq c_4\, \ell^{-\nu}$ of Lemma \ref{Bayesian LB} is satisfied by picking $c_6>0$ sufficiently small and we have
\begin{align*}
& \sup_{\pi \in \Pi^{s,\nu}(M,Q_1), K \in {\mathcal G}^\nu(Q)}\sum_{\ell \geq 1}\E\Big[\big\|\widehat {\boldsymbol f}_\ell-\pi_\ell {\boldsymbol K}^{-1}_\ell {\boldsymbol g}_\ell\big\|^2\Big] \\
\geq &\,c_5\delta^2 \sup_{\pi \in \Pi^{s,\nu}(M,Q_1)} \sum_{\ell=1}^L\ell^{4\nu + d-1}\pi_\ell^2 \\
\geq &\,c_5\delta^2  \tfrac{M^2}{Q_1^2}L^{2\nu+d-1-2s}
\geq \,c_5c_6^{2\nu+d-1-2s}\tfrac{M^2}{Q_1^2} \delta^{2s/(2\nu+d-1)}
\end{align*}
thanks to the admissible choice $\pi$ specified by $\pi_\ell^2=\ell^{-2(\nu+s)}M^2/Q_1^2$ if $\ell=L$ and $0$ otherwise. Theorem \ref{lower bounds} follows. It remains to prove Lemma \ref{Bayesian LB}.
\begin{proof}[Proof of Lemma \ref{Bayesian LB}] 
%In view of \eqref{block LB}, with no loss of generality, we may (and will) assume that $\pi_\ell=1$.
%Introduce $K^0 \in {\mathcal G}^\nu(Q)$ via its action on $H_\ell$ defined by the matrix  
%$${\boldsymbol K}^0_\ell = \tfrac{1}{2}(Q_1^{-1}+Q_2)\ell^{-\nu} {\boldsymbol I}_\ell\;\;\text{for}\;\;\ell \geq 1$$
%where ${\boldsymbol I}_\ell$ denotes the $|\Lambda|_\ell \times |\Lambda_\ell|$ identity matrix. 
In view of \eqref{block LB}, we may (and will) assume that $\pi_\ell=1$. We rely on the notation and definition of the preliminaries. Observe first that  
\begin{align}
& \inf_{\widehat {\boldsymbol f}_\ell}\sup_{K \in {\mathcal G}^{\nu}(Q)} 
\E\big[\|\widehat {\boldsymbol f}_\ell -{\boldsymbol K}_\ell^{-1} {\boldsymbol g}_\ell\|^2\big] \nonumber\\
= & \inf_{\widehat {\boldsymbol f}_\ell}\sup_{K \in {\mathcal G}^{\nu}(Q)} 
\E\big[\|\widehat {\boldsymbol f}_\ell -\big({\boldsymbol K}_\ell^{-1}- ({\boldsymbol K}^0_\ell)^{-1}\big){\boldsymbol g}_\ell\|^2\big]. \nonumber
\end{align}
where $\boldsymbol{K}^0$ is defined in \eqref{def K0}. Put $v_{\delta,\ell}=\delta^2\ell^{4\nu + d-1}$. For any $c>0$, by Chebyshev inequality, we have 
\begin{align}
& c^2v_{\delta,\ell}^{-2} \inf_{\widehat {\boldsymbol f}_\ell}\sup_{K \in {\mathcal G}^{\nu}(Q)} \E\big[\|\widehat {\boldsymbol f}_\ell -\big({\boldsymbol K}_\ell^{-1}- ({\boldsymbol K}^0_\ell)^{-1}\big){\boldsymbol g}_\ell\|^2\big] \nonumber \\
\geq &  \inf_{\widehat {\boldsymbol f}_\ell}\sup_{K \in {\mathcal G}^{\nu}(Q)} \PP\big(\|\widehat {\boldsymbol f}_\ell -\big({\boldsymbol K}_\ell^{-1}- ({\boldsymbol K}^0_\ell)^{-1}\big){\boldsymbol g}_\ell\|\geq c\,v_{\delta,\ell}\big). \label{binf translatee}
\end{align}
We adopt the same Bayesian approach as in the preliminaries and consider ${\boldsymbol K}_\ell$ as a random matrix with distribution such that
\begin{equation} \label{def prior}
{\boldsymbol K}_\ell = {\boldsymbol K}_\ell^0+c_2\,\delta \boldsymbol{\dot  W}_\ell,
\end{equation}
where $\boldsymbol{\dot W}_\ell$ is an independent copy of $\boldsymbol{\dot B}_\ell$ and $c_2>0$ is to be specified later. 
%Since 
%$$\|\pi_\ell\big({\boldsymbol K}_\ell^{-1}- ({\boldsymbol K}^0_\ell)^{-1}\big){\boldsymbol g}_\ell\| \leq 2M,$$
%the infimum in \eqref{binf translatee} is attained among estimators satisfying the constraint $\|\widehat {\boldsymbol f}_\ell\| \leq 2M$. 
%Define ${\mathcal M}_\ell$ as the set of $|\Lambda_\ell| \times |\Lambda_\ell|$ matrices and ${\mathcal M}_\ell^\nu(Q)$ the subset of ${\mathcal M}_\ell$ of matrices which are the restriction of an element $K \in {\mathcal G}^\nu(Q)$ to $H_\ell$. 
Using the randomisation \eqref{def prior} on $\boldsymbol{K}_\ell$, the right-hand side in \eqref{binf translatee} is now bigger
% and an arbitrary probability  measure $\zeta_\ell(d{\boldsymbol K}_\ell)$ on ${\mathcal M}_\ell$, we have
than
\begin{equation} \label{mesure a priori}
 \inf_{\widehat {\boldsymbol f}_\ell}\PP\big(\|\widehat {\boldsymbol f}_\ell -\big({\boldsymbol K}_\ell^{-1}- ({\boldsymbol K}^0_\ell)^{-1}\big){\boldsymbol g}_\ell\|\geq c\,v_{\delta,\ell}\big)- \PP\big(\boldsymbol{K}_\ell \notin {\mathcal M}_{\ell}^\nu(Q)\big).
\end{equation}
Let us first show that
\begin{equation} \label{first lb}
 %\inf_{\widehat {\boldsymbol f}_\ell}\E\big[\|\widehat {\boldsymbol f}_\ell  -\pi_\ell\big({\boldsymbol K}_\ell^{-1}- ({\boldsymbol K}^0_\ell)^{-1}\big){\boldsymbol g}_\ell\|^2\big]\geq c_6\delta^2\ell^{4\nu + d-1}\pi_\ell^2
  \inf_{\widehat {\boldsymbol f}_\ell}\PP\big(\|\widehat {\boldsymbol f}_\ell -\big({\boldsymbol K}_\ell^{-1}- ({\boldsymbol K}^0_\ell)^{-1}\big){\boldsymbol g}_\ell\|\geq c\,v_{\delta,\ell}\big)
\end{equation}
is bounded below for an appropriate choice of $c>0$. 
%Up to rescaling, we may (and will) assume that $\pi_\ell=1$.
Introduce the event
$${\mathcal A}_\delta = \big\{Q_1\ell^\nu c_2\delta\|\boldsymbol{\dot W}_\ell\|_{\mathrm{op}} \leq \rho\big\}$$
%$${\mathcal A}_\delta = \big\{\|(\boldsymbol{K}^0_\ell)^{-1}c_2\delta\boldsymbol{\dot W}_\ell\|_{\mathrm{op}} \leq \rho\big\}$$
for some $0 < \rho < 1$. Observe that $\|(\boldsymbol{K}^0_\ell)^{-1}c_2\delta\boldsymbol{\dot W}_\ell\|_{\mathrm{op}} \leq \rho$ on ${\mathcal A}_\delta$, therefore, by an usual Neuman series argument, we have the decomposition
\begin{align*}
& \boldsymbol{K}_\ell^{-1}-(\boldsymbol{K}^0_\ell)^{-1} \\
 = & -(\boldsymbol{K}_\ell^0)^{-1}(c_2\delta \boldsymbol{\dot W}_\ell)(\boldsymbol{K}^0_\ell)^{-1}+\sum_{n \geq 2}(-1)^n\big((\boldsymbol{K}_\ell^0)^{-1}c_2\boldsymbol{\dot W}_\ell\big)^n(\boldsymbol{K}_\ell^0)^{-1}
\end{align*}
Applying the vector $\boldsymbol{g}_\ell = (1,0,\ldots, 0)$ and setting
$$\boldsymbol{\zeta}_{\delta,\ell} = \sum_{n \geq 2}(-1)^n\big((\boldsymbol{K}_\ell^0)^{-1}c_2\boldsymbol{\dot W}_\ell\big)^n(\boldsymbol{K}_\ell^0)^{-1}{\boldsymbol g}_\ell,$$
we obtain the decomposition
\begin{align*}
 \big(\boldsymbol{K}_\ell^{-1}-(\boldsymbol{K}^0_\ell)^{-1} \big)\boldsymbol{g}_\ell =  & -(\boldsymbol{K}_\ell^0)^{-1}(c_2\delta \boldsymbol{\dot W}_\ell)(\boldsymbol{K}^0_\ell)^{-1}\boldsymbol{g}_\ell +\boldsymbol{\zeta}_{\delta,\ell} \\
 = &\; \boldsymbol{\vartheta} + \boldsymbol{\zeta}_{\delta,\ell},
\end{align*}
where $\boldsymbol{\vartheta}$ is defined in \eqref{def theta}. 
%Put $v_{\delta,\ell}=\delta\ell^{4\nu}|\Lambda_\ell|$. %By Chebyshev's inequality, for any $c>0$, we have
We derive, for any $c>0$
\begin{align*}
 %c^{-2}v_{\delta,\ell}^{-2}&\,\E\big[\|\widehat {\boldsymbol f}_\ell  -\pi_\ell\big({\boldsymbol K}_\ell^{-1}- ({\boldsymbol K}^0_\ell)^{-1}\big){\boldsymbol g}_\ell\|^2\big] \\
& \PP\big(\|\widehat {\boldsymbol f}_\ell -\big({\boldsymbol K}_\ell^{-1}- ({\boldsymbol K}^0_\ell)^{-1}\big){\boldsymbol g}_\ell\|\geq c\,v_{\delta,\ell}\big)\\
\geq &\, \PP\big(\|\widehat {\boldsymbol f}_\ell - (\boldsymbol{\vartheta} +\boldsymbol{\zeta}_{\delta,\ell})\| \geq c\,v_{\delta,\ell}\;\;\text{and}\;\;{\mathcal A}_{\delta}\big)\\
\geq &\, \PP\big(\|\widehat {\boldsymbol f}_\ell - \boldsymbol{\vartheta} \| \geq \tfrac{1}{2}c\,v_{\delta,\ell}\;\;\text{and}\;\;{\mathcal A}_{\delta}\;\;\text{and}\;\; \|\boldsymbol{\zeta}_{\delta,\ell}\| \leq\, \tfrac{1}{2}c\, v_{\delta,\ell}\big) 
%\geq &\, \PP\big(\|\widehat {\boldsymbol f}_\ell - \vartheta \| \geq (c-\tilde c)v_{\delta,\ell}\big)-\big(1-\PP({\mathcal A}_{\delta}\;\text{and}\;\|\zeta_{\delta,\ell}\| \leq\, \tilde c v_{\delta,\ell})\big)
\end{align*}
by the triangle inequality. We claim that for any $\varepsilon >0$, there exists a choice of sufficiently small $c_2$ such that for any $c>0$:
\begin{equation} \label{le reste}
\limsup_{\de\rightarrow 0} \PP\big({\mathcal A}_{\delta}\;\;\text{and}\;\;\|\boldsymbol{\zeta}_{\delta,\ell}\| \leq \tfrac{1}{2}c\, v_{\delta,\ell}\big) \geq 1-\varepsilon.
\end{equation}
Let us admit temporarily \eqref{le reste}. For such a choice, we thus have
\begin{align*}
% c^{-2}v_{\delta,\ell}^{-2}& \E\big[\|\widehat {\boldsymbol f}_\ell  -\pi_\ell\big({\boldsymbol K}_\ell^{-1}- ({\boldsymbol K}^0_\ell)^{-1}\big){\boldsymbol g}_\ell\|^2\big] \\
& \PP\big(\|\widehat {\boldsymbol f}_\ell -\big({\boldsymbol K}_\ell^{-1}- ({\boldsymbol K}^0_\ell)^{-1}\big){\boldsymbol g}_\ell\|\geq c\,v_{\delta,\ell}\big)\\
\geq & \PP\big(\|\widehat {\boldsymbol f}_\ell - \boldsymbol{\vartheta} \| \geq \tfrac{1}{2}c\,v_{\delta,\ell}\big)-\varepsilon.
\end{align*}
Let us now look at an apparently different problem: we want to estimate $\boldsymbol{\vartheta}$ from our observation $\boldsymbol{K}_{\delta,\ell}$, or equivalently, from the observation
$$-(\boldsymbol{K}_\ell^0)^{-1}(\boldsymbol{K}_{\delta,\ell}-\boldsymbol{K}_\ell^0)(\boldsymbol{K}_\ell^0)^{-1}.$$
The choice $\boldsymbol{g}_\ell=(1,0,\ldots, 0)^T$ entails that $-(\boldsymbol{K}_\ell^0)^{-1}(\boldsymbol{K}_{\delta,\ell}-\boldsymbol{K}_\ell^0)(\boldsymbol{K}_\ell^0)^{-1}\boldsymbol{g}_\ell$ is a sufficient statistic, but this last quantity is precisely $\boldsymbol{X}$ defined in \eqref{def of X}. Thus, without loss of generality, $\widehat {\boldsymbol f}_\delta$ can be taken as an estimator of the form $T(\boldsymbol{X})$. By Lemma \ref{bayesian inequality}, we know that 
$v_{\delta,\ell}$ is a lower bound for estimating $\boldsymbol{\vartheta}$. 

More specifically, by taking $c$ such that $c\le 2\sqrt{c_3}$, we have 
$$\PP\big(\|\widehat {\boldsymbol f}_\ell - \boldsymbol{\vartheta} \| \geq \tfrac{1}{2}c\,v_{\delta,\ell}\big) -\varepsilon \geq \tfrac{1}{2}-\varepsilon \geq \tfrac{1}{4}$$
say, since the choice of $\varepsilon$ is arbitrary, and \eqref{first lb} follows. It remains to prove \eqref{le reste}. 
 
First, we have that $|\Lambda_\ell|^{-1/2}\|\boldsymbol{\dot W}_\ell\|_{\mathrm{op}}$ is bounded in probability by Lemma \ref{concentration operator} in $\ell \geq 1$. Since $|\Lambda_\ell|^{1/2}\delta \leq c_4\ell^\nu$ by assumption, we also have that $\ell^\nu\delta \|\boldsymbol{\dot W}_\ell\|_{\mathrm{op}}$ is bounded in probability, hence the probability of ${\mathcal A}_\delta$ can be taken arbitrarily close to $1$ by taking $c_2$ sufficiently small. Moreover, on ${\mathcal A}_\delta$, we have
\begin{align*}
\|\zeta_{\delta, \ell}\|  \leq &\,Q_1\ell^\nu \sum_{n \geq 2} \big(Q_1\ell^\nu c_2\delta \|\boldsymbol{\dot W}_\ell\|_{\mathrm{op}}\big)^n 
\\
 \leq &\, (1-\rho)^{-1}   c_2^2\,Q_1^3 \delta^2\ell^{3\nu}   \|\boldsymbol{\dot W}_\ell\|_{\mathrm{op}}^2 \\
 \leq &\, (1-\rho)^{-1}  c_2^2\,Q_1^3 \delta\ell^{2\nu} |\Lambda_\ell|^{1/2}c_4 |\Lambda_\ell|^{-1}\|\boldsymbol{\dot W}_\ell\|_{\mathrm{op}}^2
  \end{align*}
where we again used the fact that $|\Lambda_\ell|^{1/2}\delta \leq c_4\ell^{-\nu}$ by assumption. The claim follows from the fact that $|\Lambda_\ell|^{-1/2}\|\boldsymbol{\dot W}_\ell\|_{\mathrm{op}}$ is bounded in probability. Hence \eqref{le reste} and \eqref{first lb} is proved.

In order to complete the proof of Lemma \ref{Bayesian LB}, we need to check that the term $\PP\big(\boldsymbol{K}_\ell \notin {\mathcal M}_{\ell}^\nu(Q)\big)$ can be taken arbitrarily small when bounding \eqref{binf translatee} below by \eqref{mesure a priori}. We have
\begin{align} 
& \PP\big(\boldsymbol{K}_\ell \notin {\mathcal M}_{\ell}^\nu(Q)\big)  \nonumber \\
\leq & \PP\big(\|\boldsymbol{K}_\ell\|_{\mathrm{op}} > Q_2\ell^{-\nu}\big)
+ \PP\big(\|\boldsymbol{K}^{-1}_\ell\|_{\mathrm{op}} > Q_1\ell^{\nu}\big). \label{controle contrainte}
\end{align}
For the first term in the right-hand side of \eqref{controle contrainte}, we have
\begin{align*}
\PP\big(\|\boldsymbol{K}_\ell\|_{\mathrm{op}} > Q_2\ell^{-\nu}\big)
& \leq \PP\big(\|c_2\delta \boldsymbol{\dot W}_\ell\|_{\mathrm{op}} >Q_2\ell^{-\nu}-\|\boldsymbol{K}_\ell^0\|_{\mathrm{op}}\big) \\
& \leq \PP\big(\|c_2\delta \boldsymbol{\dot W}_\ell\|_{\mathrm{op}} >(Q_2-c_1)\ell^{-\nu}\big). 
\end{align*}
%for some $0< r <1$ by assuming, with no loss of generality, that $c_1<Q_2$.
The last term can be rewritten as
$$\PP\big(|\Lambda_\ell|^{-1/2}\|\boldsymbol{\dot W}_\ell\|_{\mathrm{op}} > (Q_2-c_1)c_2^{-1}\ell^{-\nu}|\Lambda_\ell|^{-1/2}\delta^{-1}\big).$$
For the second term in the right-hand side of \eqref{controle contrainte}, thanks to the property $\|\boldsymbol{K}^{-1}_\ell\|_{\mathrm{op}} \le \bigl(c_1\ell^{-\nu}-
\|c_2\de\boldsymbol{\dot W}_\ell\|_{\mathrm{op}}\bigr)^{-1}$ we derive
\begin{align*}
& \PP\big(\|\boldsymbol{K}^{-1}_\ell\|_{\mathrm{op}} > Q_1\ell^{\nu}\big) \\
\le &
\PP\big(|\Lambda_\ell|^{-1/2}\|\boldsymbol{\dot W}_\ell\|_{\mathrm{op}} >(c_1-Q_1^{-1})
c_2^{-1}\ell^{-\nu}|\Lambda_\ell|^{-1/2}\delta^{-1}\big).
\end{align*}
By assumption, we have that $\ell^{-\nu}|\Lambda_\ell|^{-1/2}\delta^{-1}$ is bounded away from zero. Since  $|\Lambda_\ell|^{-1/2}\|\boldsymbol{\dot W}_\ell\|_{\mathrm{op}}$ is tight in $\ell \geq 1$, we can conclude by taking $c_2$ sufficiently small.
%, we may apply the concentration Lemma \ref{concentration operator}. The assumption $\ell \geq \delta^{-q}$ for some $q>0$ entails
%$$\PP\big(\boldsymbol{K}_\ell \notin {\mathcal M}_{\ell}^\nu(Q)\big)  \leq \exp(-C\delta^{-q'})\;\;\text{for some}\;\;C,q'>0,$$
%which is negligible in comparison with $\delta^2\ell^{4\nu + d-1}\pi_\ell^2$. 
The proof of Lemma \ref{Bayesian LB} is complete.

%We may interpret $\widehat {\boldsymbol f}_\ell$ as an estimator of the parameter
%$$\vartheta = \pi_\ell ({\boldsymbol K}_\ell^{-1}- ({\boldsymbol K}^0_\ell)^{-1})=$$
%Observing $\boldsymbol{K}_{\delta,\ell}$ is equivalent to observing 
\end{proof}
\newpage

{\small
\subsubsection*{Acknowledgements}
The research of M. Hoffmann is partly supported by the French Agence Nationale de la Recherche (Blanc SIMI 1 2011 project CALIBRATION).
The research of D. Picard is partly supported by the French Agence Nationale de la Recherche (ANR-09-BLAN-0128 PARCIMONIE). We are grateful to J. Rousseau for helpful comments. 
}
%\begin{center}
%{\tt [add other grant acknowledgements\;(Dominique, Sylvain?)]}
%\end{center}

\bibliographystyle{plain}       % (uses file "plain.bst")
\bibliography{BiblioDHPV}           % expects file "BiblioDHPV.bib"

\begin{thebibliography}{10}

\bibitem{AS}
F.~Abramovich and B.~W. Silverman.
\newblock Wavelet decomposition approaches to statistical inverse problems.
\newblock {\em Biometrika}, 85:115--129, 1998.

\bibitem{B}
O.~Bousquet.
\newblock A {B}ennett concentration inequality and its application to suprema
  of empirical processes.
\newblock {\em C.R. Math. Acad. Sci. Paris}, 334:495--500, 2002.

\bibitem{C}
T.~Cai.
\newblock Adaptive wavelet estimation: A block thresholding and oracle
  inequality approach.
\newblock {\em Ann. Statist.}, 27:898--924, 1999.

\bibitem{CZ}
T.~Cai and H.~Zou.
\newblock A data-driven block thresholding approach to wavelet estimation.
\newblock {\em Ann. Statist.}, 37:569--595, 2009.

\bibitem{CH}
L.~Cavalier and N.~W. Hengartner.
\newblock Adaptive estimation for inverse problems with noisy operators.
\newblock {\em Inverse Problems}, 21:1345--1361, 2005.

\bibitem{CavaR}
L.~Cavalier and M.~Raimondo.
\newblock Wavelet deconvolution with noisy eigenvalues.
\newblock {\em IEEE Transactions on signal processing}, 55:2414--2424, 2007.

\bibitem{CR}
L.~Cavalier and M.~Raimondo.
\newblock Wavelet deconvolution with noisy eigenvalues.
\newblock {\em IEEE Trans. SIgnal Processing}, 55:2414--2424, 2007.

\bibitem{CavaTsyb}
L.~Cavalier and A.B. Tsybakov.
\newblock Sharp adaptation for inverse problems with random noise.
\newblock {\em Probab. Theory Relat. Fields}, 123:323--254, 2002.

\bibitem{ComteLacour}
F.~Comte and C.~Lacour.
\newblock Data driven density estimation in presence of unknown convolution
  operator.
\newblock {\em J. Royal Stat. Soc., Ser B.}, 73:601--627, 2011.

\bibitem{PSFEst1}
J.G. McNally C. Preza~J.A. Conchello and L.J. Thomas.
\newblock Artifacts in computational optical-sectioning microscopy.
\newblock {\em J. Opt. Soc. Am. A}, 11:1056--1067, 1994.

\bibitem{DS}
K.~R. Davidson and S.~J. Szarek.
\newblock {\em Local operator theory, random matrices and Banach spaces. In
  Handbook on the Geometry of Banach Spaces 1}.
\newblock North-Holland, Amsterdam, (w. b. johnson and j. lindenstrauss, eds.)
  edition, 2001.

\bibitem{Donoho}
D.~Donoho.
\newblock Nonlinear solution of linear inverse problems by waveletÐ vaguelette
  decomposition.
\newblock {\em Appl. Comput. Harmon. Anal.}, 2:101--126, 1995.

\bibitem{DJ}
D.~L. Donoho and I.~M. Johnstone.
\newblock Adapting to unknown smoothness via wavelet shrinkage.
\newblock {\em J. Amer. Statist. Assoc.}, 90:1200--1224, 1995.

\bibitem{EK}
S.~Efromovich and Kolchinskii V.
\newblock On inverse problems with unknown operators.
\newblock {\em IEEE Transf. Inf. Theory}, 47:2876--2894, 2001.

\bibitem{PSF2}
S.F. Gibson and F.~Lanni.
\newblock Diffraction by a circular aperture as a model for three-dimensional
  optical microscopy.
\newblock {\em J. Opt. Soc. Am. A}, 6, 1357--1367.

\bibitem{Healy1998}
D.M. Healy~H. Hendriks and P.T. Kim.
\newblock Spherical deconvolution.
\newblock {\em J. Multivariate Anal.}, 67, 1--22.

\bibitem{CHR}
A.~Cohen~M. Hoffmann and M.~Rei\ss.
\newblock Adaptive wavelet {G}alerkin methods for linear inverse problems.
\newblock {\em SIAM J. Numer. Anal.}, 42:1479--1501, 2004.

\bibitem{HR}
M.~Hoffmann and M.~Rei\ss.
\newblock Nonlinear estimation for linear inverse problems with error in the
  operator.
\newblock {\em Ann. Statist.}, 36:310--336, 2008.

\bibitem{IH}
I.A. Ibragimov and R.Z Hasminskii.
\newblock {\em Statistical Estimation. Asymptotic Theory}.
\newblock Springer-Verlag, 1981.

\bibitem{Keller}
H.~E. Keller.
\newblock {\em Handbook of Biological Confocal Microscopy}.
\newblock Plenum Press, New York, 2nd edition edition, 1995.
\newblock chapter Objective lenses for confocal microscopy.

\bibitem{HKP}
P.~Hall~G. Kerkyacharian and D.~Picard.
\newblock On the minimax optimality of block thresh- olded wavelet estimators.
\newblock {\em Statist. Sinica}, 9:33--50, 1999.

\bibitem{KK}
P.T. Kim and Koo Y.Y.
\newblock Optimal spherical deconvolution.
\newblock {\em J. Multivariate Anal.}, 80:21--42, 2002.

\bibitem{M}
P.~Massart.
\newblock {\em Concentration inequalities and model selection}.
\newblock Ecole dÕEt\'e de Probabilit\'es de Saint-Flour XXXIII (Jean Picard
  ed.), Lecture Notes in Mathematics 1986. Springer, 2007.

\bibitem{Neumann}
M.H. Neumann.
\newblock On the effect of estimating the error density in nonparametric
  deconvolution.
\newblock {\em J. Nonparametr. Statist.}, 7:307--330, 1997.

\bibitem{KPP}
G.~Kerkyacharian T.M.~Pham Ngoc and D.~Picard.
\newblock Localized deconvolution on the sphere.
\newblock {\em Ann. Statist.}, 39:1042--1068, 2011.

\bibitem{NP}
M.~Nussbaum and Pereverzev S.V.
\newblock The degree of ill-posedness in stochastic and deterministic models.
\newblock {\em Preprint No. 509, Weierstrass Institute (WIAS), Berlin}, 1999.

\bibitem{microscopy}
P.~Pankajakshan L. Blanc-F\'eraud B. Zhang Z. Kam J-C. Olivo-Marin and
  J.~Zerubia.
\newblock Parametric blind deconvolution for confocal laser scanning microscopy
  (clsm)-proof of concept.
\newblock {\em Projet INRIA Ariana, research report 6493}, 2008.

\bibitem{KKLPPP}
G.~Kerkyacharian G. Kyriazis E. Le Pennec~P. Petrushev and D.~Picard.
\newblock Inversion of noisy radon transform by svd based needlets.
\newblock {\em Appl. Comput. Harmonic Anal.}, 28:24--45, 2010.

\bibitem{JKPR}
I.~Johnstone G. Kerkyacharian~D. Picard and M.~Raimondo.
\newblock Wavelet deconvolution in a periodic setting.
\newblock {\em J. R. Stat. Soc. Ser. B Stat. Methodol.}, 66:547--573, 2004.

\bibitem{Hell}
G.~Reiner C.~Cremer S.~Hell and E.H.K. Stelzer.
\newblock Aberrations in confocal fluorescence microscopy induced by mismatches
  in refractive index.
\newblock {\em J. Microscopy}, 169:391--405, 1993.

\bibitem{PSF1}
P.A. Stokseth.
\newblock Properties of a defocused optical system.
\newblock {\em J. Opt. Soc. Am. A}, 59:1314--1321, 1969.

\bibitem{Tsyb}
A.B. Tsybakov.
\newblock On the best rate of adaptive estimation in some inverse problems.
\newblock {\em C.R. Acad. Sci. Paris S\'er. I Math.}, 330:835--840, 2000.

\bibitem{RR}
A.~C.~M. van Rooij and F.~H. Ruymgaart.
\newblock Regularized deconvolution on the circle and the sphere.
\newblock {\em In Nonparametric Functional Estimation and Related Topics
  (Spetses, 1990). NATO Adv. Sci. Inst. Ser. C Math. Phys. Sci.}, 335:679--690,
  1990.

\bibitem{V}
N.J. Vilenkin.
\newblock {\em Fonctions sp\'eciales et th\'eorie de la repr\'esentation des
  groupes}.
\newblock Monographies Universitaires de Math\'ematiques, 33. Dunod, Paris,
  1969.

\end{thebibliography}

\end{document}